\documentclass[12pt]{article}
\title{Random Schr\"odinger operators on long boxes, \\noise explosion and the GOE}
\date{\today }
\author{Benedek Valk\'o
\footnote{Department of Mathematics, University of
Wisconsin - Madison, WI 53705, USA. valko@math.wisc.edu}
 \hskip 3em B\'alint Vir\'ag\footnote{Department of Mathematics, University of Toronto, ON M5S
2E4, Canada. balint@math.toronto.edu} }

\oddsidemargin 0in \topmargin 0in \headheight 0in \headsep 0in
\textheight 9in \textwidth 6.5in

\usepackage{amsfonts}
\usepackage{graphicx}
\usepackage{amsmath}
\usepackage{amsthm,color}

\theoremstyle{plain}
    \newtheorem{theorem}{Theorem}
    \newtheorem{lemma}[theorem]{Lemma}
    \newtheorem{proposition}[theorem]{Proposition}
    \newtheorem{corollary}[theorem]{Corollary}

\theoremstyle{definition} 
    
    \newtheorem{remark}[theorem]{Remark}

\theoremstyle{remark} 

\newcommand\mnote[1]{} 
\newcommand\be{\begin{equation}}

\newcommand\ee{\end{equation}}

\newcommand{\comment}[1]{}
\newcommand{\eps}{\varepsilon}
\newcommand{\Z}{{\mathbb Z}}

\newcommand{\R}{{\mathbb R}}
\newcommand{\CC}{{\mathbb C}}

\newcommand{\ev}{\mbox{\bf E}}

\newcommand{\one}{{\mathbf 1}}

\newcommand{\dist}{\mbox{\rm dist}}

\newcommand{\sm}{{\raise0.3ex\hbox{$\scriptstyle \setminus$}}}

\newcommand{\tr}{\operatorname{tr}}

\newcommand{\ed}{\stackrel{d}{=}}

\newcommand\vect[2]{
\left[\begin{array}{c}
#1\\
#2
\end{array}\right]}

\newcommand\mat[4]{
\left[\begin{array}{cc}
#1& #2\\
#3 & #4
\end{array}\right]}
\newcommand{\diag}{\operatorname{diag}}
\newcommand{\cha}{\operatorname{cha}}
\newcommand{\spec}{\operatorname{spec}}
\newcommand{\zeros}{\operatorname{zeros}}

\usepackage{natbib}
\bibliographystyle{balint}

\begin{document}
\maketitle
\begin{abstract}
It is conjectured that the eigenvalues of random
Schr\"odinger operators at the localization transition in
dimensions $d\ge 2$ behave like the eigenvalues of the
Gaussian Orthogonal Ensemble (GOE). We show that  there are sequences of $n\times m$ boxes with $1\ll m\ll n$ so that
the eigenvalues in low disorder converge to Sine$_1$, the
limiting eigenvalue process of the GOE. For the GOE case, this is the first example where Wigner's famous prediction is proven rigorously: we exhibit a complex system whose eigenvalues behave like those of random matrices.
\end{abstract}

\section{Introduction}

When \cite{Wigner57} introduced random matrices to
model large atomic nuclei, his main goal was to find a
simple mathematical model that shows repulsion between
eigenvalues as observed in the data. The Gaussian
orthogonal ensemble (GOE) has since been a remarkable
success in the physics literature: several important
complex systems are predicted to have eigenvalue repulsion
akin to the GOE, most notably the Laplacians of many planar
domains, see \cite{bohigas}.

Given  all the non-rigorous theories that predict GOE
behavior, as well as a lot of numerical evidence, it is
surprising that there are hardly any mathematically
rigorous results in this direction. Most objects with
rigorously known bulk GOE behavior are themselves random
matrices constructed similarly to the Gaussian orthogonal
ensemble, see \cite{TV} and \cite{EY2} for recent
breakthrough results in this direction.

The goal of this paper is to provide the first example where Wigner's  prediction is proven rigorously: a complex system whose eigenvalues behave like those of random matrices. To clarify, the Gaudin-Mehta theorem implies that the rescaled eigenvalue process of the $n\times n$ GOE matrix converges to a limiting point process, which we call Sine$_\beta$, $\beta=1$; the same holds for GUE and GSE matrices, with $\beta=2,4$, respectively (see \cite{BVBV} for general $\beta$). A model is called (bulk) {\bf universal} if its eigenvalue process converges to Sine$_\beta$, for  $\beta=1,2$ or $4$.

We show that certain random Schr\"odinger operators on long $m\times n$ boxes, $1\ll m\ll n$ behave like the Gaussian orthogonal ensemble. More precisely, we will show

\begin{theorem}\label{thmmain}
There exists a sequence of $m\times n$ boxes, $1\ll m\ll n$  so that
the rescaled eigenvalue process of the adjacency matrix
plus suitable diagonal noise converges to the Sine$_1$
point process.
\end{theorem}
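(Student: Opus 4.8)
The plan is to reduce the two-dimensional problem to a quasi-one-dimensional one by working on long strips, to extract an SDE-characterized scaling limit in the long direction for each fixed width --- this is the representation announced in the abstract --- and then to let the width tend to infinity along a carefully chosen subsequence so that the limit degenerates onto Sine$_1$; a diagonal argument then produces the required sequence of weighted boxes. Concretely, take the box to be $B_{n,m}=\{1,\dots,n\}\times\{1,\dots,m\}$ with edge weights chosen so that the transverse coupling is weak but not negligible, let $A_{n,m}$ be its adjacency matrix, and add a diagonal potential $\lambda_n V$, where $V=(V_{i,j})$ has i.i.d.\ centered, unit-variance entries. Rewriting the eigenvalue equation $(A_{n,m}+\lambda_n V-E)\psi=0$ as a first-order recursion in the long coordinate gives $\Psi_{k+1}=T_k(E)\Psi_k$ with $\Psi_k=(\psi_{\cdot,k},\psi_{\cdot,k-1})\in\R^{2m}$ and $T_k(E)\in\mathrm{Sp}(2m,\R)$ a random symplectic matrix depending only on the $k$-th column of $V$. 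Sturm--Liouville oscillation theory on the strip identifies the eigenvalue counting function of $A_{n,m}+\lambda_n V$ near a bulk energy with the winding of the Lagrangian plane $T_n(E)\cdots T_1(E)L_0$, equivalently with a Pr\"ufer-type phase on the Lagrangian Grassmannian $\mathcal L(m)=U(m)/O(m)$.

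Fix $m$ and a bulk energy $E_0$ at which all transverse channels propagate and the free transfer matrix is elliptic; rescale the spectral parameter to the scale of the mean eigenvalue spacing near $E_0$, and scale the disorder so that $\lambda_n^2 n$ converges to a positive constant --- the critical regime in which the Thouless length is comparable to $n$, so that the limiting process is neither a clock process nor Poisson. Each $T_k(E)$ is then an $O(1/\sqrt{n})$ perturbation of a fixed elliptic symplectic matrix, so the matrix products satisfy an invariance principle and converge, after the natural time change, to a diffusion on $\mathcal L(m)$ driven by a Brownian motion on the symplectic Lie algebra, with drift encoding $E_0$ and the geometry of the space. Reading off the winding functional, the rescaled eigenvalue point process near $E_0$ converges to a translation-invariant point process $\Lambda_m$ whose counting function is a deterministic functional of this diffusion; equivalently $\Lambda_m$ is characterized by an explicit system of stochastic differential equations, which is the SDE representation of the abstract.

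The crux is the behaviour of $\Lambda_m$ as $m\to\infty$, with the disorder re-tuned to stay critical, $\lambda_n^2\sim c\,m/n$ (so $m=o(n)$ keeps the disorder weak). As the width grows, the $m$ transverse channels all feed into the winding phase that counts eigenvalues, so the quadratic variation rate of the driving martingale --- a trace over the channels --- diverges with $m$: the noise explosion. After rescaling the spectral window to unit intensity to absorb this divergence, one must show that, along a suitable sequence $(m_k,\lambda_{n_k})$, the renormalized phase SDE converges to the hyperbolic-carousel SDE characterizing Sine$_1$, rather than to a clock or Poisson limit, and also that it does not split into a superposition of independent one-dimensional pieces --- which is why the transverse coupling must be tuned to mix the channels rather than decouple them. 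That the limit is the $\beta=1$ member of the Sine$_\beta$ family, and not some other $\beta$, should be forced by the reality (orthogonal symmetry) of the Schr\"odinger operator, matching the physics prediction of GOE statistics for disordered wires in the orthogonal class. The main obstacle is exactly this step: controlling the transfer-matrix diffusion on a symmetric space of growing dimension, handling the noise explosion via the right renormalization, and pinning the limiting martingale problem to that of Sine$_1$; it also requires quantitative rates in both this step and the diffusive limit.

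Finally, for each fixed $m$ the first two steps give $\Lambda_{m,n}\Rightarrow\Lambda_m$ as $n\to\infty$, and the third gives $\Lambda_m\Rightarrow$ Sine$_1$ as $m\to\infty$ under the re-tuning. Choosing $n_k$ large enough relative to the convergence rates and letting $m_k\to\infty$ slowly, the rescaled eigenvalue process of $A_{n_k,m_k}+\lambda_{n_k}V$ converges to the Sine$_1$ point process; the weighted boxes $B_{n_k,m_k}$, with their edge weights and disorder strengths, are the boxes in the statement.
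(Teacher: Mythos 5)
Your first two steps match the paper's strategy: set up the block transfer matrix recursion in the long direction, diagonalize the free transfer matrix, exploit the oscillatory terms to get noise explosion (a full Hermitian/symmetric noise out of a diagonal potential), and pass to an SDE limit for the regularized evolution at a fixed width $m$ (this is the content of Proposition~\ref{letm}). You also correctly isolate the crux: passing from the width-$m$ limit process $\Lambda_m$ to Sine$_1$.

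However, at that crux your route diverges from the paper's and, as you yourself flag, is left unexecuted. You propose to keep the disorder ``critical,'' let $m\to\infty$ inside the $2m$-dimensional symplectic/Lagrangian diffusion, and show the renormalized phase SDE converges to the Sine$_1$ carousel SDE. This is a genuinely hard open problem --- you would essentially be re-deriving bulk GOE universality from scratch via a martingale problem on symmetric spaces of growing dimension, and there is no indication in the paper (or in the literature it cites) that this can currently be done. The paper sidesteps the obstacle entirely. In Proposition~\ref{shorttime} and Lemma~\ref{l:aux} it lets $\sigma_\nu\to 0$ \emph{and} $r_\nu\to 0$ along arithmetic subsequences: $\sigma_\nu\to 0$ means the SDE is run only for an infinitesimal ``time,'' so the solution degenerates to its linearization $Y = iS^2\,\mathrm{diag}(I,-I)\lambda t + iS\,(\,\text{noise}\,)\,S$, and $r_\nu\to 0$ makes $S\to sI$ scalar. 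The eigenvalue process of the operator then converges not to some exotic diffusion functional but to the eigenvalues of an explicit finite $m\times m$ Gaussian matrix $S_{11}\Im(A(1)-B(1))S_{11} = (m+1)^{-1/2}s^2(K+bI)$, a slightly modified GOE (Remark~\ref{GOE}). The $m\to\infty$ step is then outsourced to the Erd\H{o}s--Schlein--Yau--Yin local relaxation flow / strong local semicircle machinery (Lemma~\ref{l:locallimit}), which shows the bulk eigenvalue process of $K+bI$ converges to Sine$_1$; a diagonal argument in $(m,\nu)$ finishes. Note also that the paper's successful regime is the opposite of what your intuition suggests in two respects: the transverse coupling is sent to zero (rather than tuned to ``mix the channels''), and the disorder is tuned sub-critically ($\sigma_\nu\to0$) rather than kept at the critical Thouless scale --- the mixing needed for noise explosion has already happened on the way, in the oscillatory sums, and what remains is to freeze the resulting Gaussian matrix before the full diffusion has time to act. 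So the gap in your proposal is concrete: your third step is not a proof, and the actual proof replaces it with a degeneration to a finite random matrix plus an appeal to known Wigner universality.
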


Theorem \ref{thmmain} will be proved in Corollary
 \ref{Sine1 limit} which is based on  Proposition \ref{shorttime} (see Section \ref{section:GOE}).

The boxes we consider are very long, so in some ways our result is related to the quasi-one dimensional setting,
except that both dimensions of the box converge to infinity.

Recall that the Sine$_1$ point process is the limit of the
eigenvalues at the bulk of the spectrum of the Gaussian
orthogonal ensemble, see \cite{mehta}. The process-level
convergence described here implies convergence of
eigenvalue gap sizes and all similar local statistics.

\subsubsection*{Lack of true universality in one dimension}

We would like to put the present work in the context of two results. The first is due to \cite{KS}. They
consider the CMV matrices with random Verblunski coefficients, which are a unitary analogue of a random
Schr\"odinger operators. Their focus is the CMV representation of the Haar unitary random matrix.  The entries of this matrix are built out of the Verblunski coefficients, which, in this case are independent, but non-identically distributed complex random variables with a spherically symmetric distributions. Naturally, since these are conjugates of Haar unitary matrices, the eigenvalue process
converges to the  Sine$_2$ process, the bulk limit of the GUE. Moreover, \cite{KS} show that this holds even if the distribution of the random variables is changed, as
long as spherical symmetry and the variances are kept. In this sense, this is a universality result. (A similar result, for the Hermitian case was proven by \cite{BVBV}, but there, due to the lack of spherical symmetry, more moments need to be fixed.)

However, in one-dimension, a very special, non-iid variance structure was necessary to get the Sine$_2$ limit. In fact \cite{KS} show that a constant factor change for
all variables will change the limiting process (they call the limit $C\beta E$, later shown to be identical with Sine$_\beta$). Many limiting processes, can be achieved by adjusting the variances judiciously. It is expected that the more natural model, in which the variances are identical, would give a version of the Sch$_\tau$ process, described below.

The standard critical one-dimensional Schr\"odinger operators were treated in \cite{KVV}: we considered adding i.i.d.~diagonal noise
with variance $\sigma^2/n$ to the adjacency matrix of a 1-dimensional path of length $n$. In this critical regime, as
$n\to\infty$ we got a limiting  point process Sch$_\tau$, whose parameter is given by $\tau=\sigma^2/(2-\lambda_*^2)$, where $\lambda_*$ is the macroscopic location of our window in
the spectrum. The key is that Sch$_\tau$ is not a limit of GOE, GUE, or other usual random matrix models. It is a different point process with stronger
repulsion and long-range order. As $\tau$ ranges from $0$ to $\infty$, it interpolates between the clock and Poisson distributions. In \cite{KVV} it is also shown that for a special sequence of decaying variances, all Sine$_\beta$ processes can also be achieved as limits. Again, this is not true universality: GOE-like limits appear only for a careful choice of variances.

\subsubsection*{Universality when $1\ll m\ll n$}

As we have seen, in order to find truly universal GOE/GUE phenomenon in Schr\"odinger operators, one has to go beyond one dimension. This is what we do here,
by considering an $m\times n$ box, with $1\ll m \ll n$ with diagonal noise added.

Our strategy is similar to that of \cite{KatzSarnak}, who proved the first true universality result
for GUE for zeta functions of random algebraic curves over finite fields. There are two parameters, the degree $n$ and characteristic $p$. They first let
$p\to \infty$ and use the Deligne equidistribution theorem to show that the zeros of that zeta function have the same distribution as eigenvalues of a Haar unitary matrix. They then conclude universality from a double limit argument (Section 13.8).

We proceed similarly. We first find a regime where as $n\to\infty$ the eigenvalues of our operator converge to that of a certain $m\times m$ random matrix which is similar, but not identical, to the GOE. Then, using results of Erd\H os, Yau, et al and a diagonalization argument we show that for a certain sequence of $1\ll m\ll n$, we get the universal Sine$_1$ limit.

\subsubsection*{Random Schr\"odinger operators}
The concept of localization for random Schr\"odinger
operators was introduced by \cite{Anderson}. As more and
more diagonal noise is added to the Laplacian of a large
box, the eigenvectors change from being spread out over the
entire box to localized on smaller regions. It is believed
(see, for example, \cite{Altshuler}, \cite{efetov}) that
near this transition the eigenvalue process is GOE-like,
and, as the noise increases, it is approximately Poisson.

In the long box case, we have a related phenomenon. As $n\to\infty$,
we get a limiting point process $\Lambda_{m,\sigma}$ depending on the noise. In a certain
scaling, when $\sigma\to 0$, this $\Lambda_{m,\sigma}$ converges to the eigenvalue process of an $m\times m$ Gaussian random matrix.
More precisely, we have

\begin{theorem}[Eigenvalue process transition]\label{thm2}  Consider boxes in $\mathbb
Z^2$ with fixed base $\Z_m$ and fix a weight $0<r\le 1$. Consider the process of eigenvalues of
 \begin{align}\label{scaled spec}
 n(({r}\Z_m) \times \mathbb
Z_n+\frac{\sigma}{\sqrt{n}}V-I\lambda^*),
\end{align}
 where $V$ are
diagonal matrices with independent, mean zero, variance 1
entries with bounded third moment. Then
\begin{itemize}
\item For almost all  $\lambda^*\in (-2(1-r\cos(\pi/(m+1))),2(1-r\cos(\pi/(m+1)))$, for appropriate subsequences, this process has a limit
$\Lambda_{\sigma}$ which depends on $m, r$ and $\lambda^*$.
\item{} As $\sigma\to 0$, the  process ${\sigma^{-1}}\Lambda_\sigma$
converges to the  (randomly shifted) eigenvalue process of
a random matrix with independent {real} Gaussian entries.
\item For every $\sigma$, the process $\Lambda_\sigma$ is
the zero process of the determinant of a $m$-dimensional
matrix-valued analytic function described by a stochastic
differential equation.
\end{itemize}
\end{theorem}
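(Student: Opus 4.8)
The plan is to reduce the eigenvalue problem on the long box to a one-dimensional transfer-matrix recursion along the length of the box, take a diffusion limit, and analyze the resulting matrix-valued stochastic differential equation.

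\textbf{Step 1: reduction to transfer matrices.} With the base $\Z_m$ held fixed, the operator $\frac12\Z_m\times\Z_n+\frac{\sigma}{\sqrt n}V-I\lambda^*$ acts on vector-valued sequences indexed along the long direction, and the eigenvalue equation at spectral parameter $\lambda^*+z/n$ is a three-term recurrence with matrix coefficients. I would package its solutions into transfer matrices $T_k(z)$ (products of the first $k$ one-step maps) and express the eigenvalues of the box operator in the window $\lambda^*+n^{-1}[\,\cdot\,]$ as the zeros of a boundary-matching determinant $\det\Phi_n(z)$ assembled from $T_n(z)$ and the two boundary conditions at the ends of the box. Diagonalizing the one-step map at $z=0$ splits it into $m+1$ channels; channel $j$ contributes a deterministic rotation by an angle $\theta_j(\lambda^*)$ per step, and for $\lambda^*$ outside a Lebesgue-null set all channels are elliptic, i.e.\ $\lambda^*$ lies strictly inside the $j$-th spectral band for every $j$. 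This is the ``almost all $\lambda^*$'' of the statement.

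\textbf{Step 2: noise explosion and the SDE.} The deterministic part of $T_n$ rotates channel $j$ through a total of $n\theta_j(\lambda^*)$ radians; choosing $n$ along a subsequence so that the fractional parts of $n\theta_j(\lambda^*)/(2\pi)$ converge for all $j$ simultaneously (the ``appropriate subsequences''), I would conjugate these rotations away. In the rotating frame the conjugated transfer matrix is a product of $n$ independent factors, each within $O(n^{-1/2})$ of the identity, with conditional increments of mean $O(n^{-1})$ and covariance $O(n^{-1})$, and the bounded third moment of $V$ supplies a Lindeberg condition. A functional central limit theorem for such products then yields convergence of $t\mapsto T_{\lfloor nt\rfloor}(z)$ to the solution $\mathcal T(z)$ of a linear matrix-valued SDE driven by a Brownian motion, with drift depending affinely on $z$ and with analyticity in $z$ passing to the limit. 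The boundary-matching determinant converges accordingly to $\det\Phi(z)$, where $\Phi$ is an $(m+1)$-dimensional matrix-valued entire function built from $\mathcal T$ and the boundary data, and $\det\Phi\not\equiv0$ almost surely. By Hurwitz's theorem together with the locally uniform convergence $\Phi_n\to\Phi$ the zero processes converge, which gives the first and third bullets with $\Lambda_\sigma=\zeros\det\Phi$.

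\textbf{Step 3: the $\sigma\to0$ limit.} For small $\sigma$ the Brownian term in the SDE for $\mathcal T$ carries a factor $\sigma$. Rescaling the spectral variable so as to balance the noise against the drift (equivalently, rescaling time), and using that the stochastic integral accumulated over the whole box is, to leading order, a Gaussian matrix which is real symmetric because the diagonal noise $V$ is real, I would show that the rescaled $\Phi$, after centering by the deterministic boundary phases, converges up to a fixed linear change of variables to $G-wI$, with $G$ an $(m+1)\times(m+1)$ symmetric matrix of independent Gaussian entries and $w$ the rescaled spectral variable. Hence the rescaled $\Lambda_\sigma$ converges to the eigenvalue process of $G$, shifted by the (subsequence-dependent, hence effectively random) vector of boundary phases, which is the second bullet. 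That the base is finite-dimensional is precisely what makes the $\sigma\to0$ object a finite random matrix rather than an operator.

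\textbf{The main obstacle.} The heart of the argument is Step 2: showing that the diagonal noise, negligible at each single step, genuinely \emph{explodes} in the rotating frame into a nondegenerate Brownian motion rather than self-averaging away or producing an unbounded drift, and doing so with estimates uniform in the spectral parameter $z$ on compact sets. The accompanying points --- tightness of the matrix-valued paths, keeping all $m+1$ channels simultaneously elliptic with convergent boundary phases (the common source of both the excluded null set of $\lambda^*$ and the passage to a subsequence), and verifying that the limiting $\det\Phi$ is almost surely not identically zero so that Hurwitz's theorem applies and no eigenvalues escape the window --- are where the real work lies.
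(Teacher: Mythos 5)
Your overall architecture matches the paper's: pass to $2m\times 2m$ block transfer matrices along the long direction, conjugate by the one-step map at the reference energy to obtain a slowly varying ``regularized'' evolution, apply a Markov-chain-to-SDE diffusion limit with the third-moment bound supplying Lindeberg, keep track of analyticity in the spectral variable so that zeros of boundary determinants converge (Montel/Hurwitz), and then analyze the short-time/small-$\sigma$ regime of the limiting linear matrix SDE. These are exactly the steps taken in the paper via the regularized evolution $X_k$, Propositions~\ref{letm} and~\ref{shorttime}, and Corollary~\ref{eigenvalues converge}. Your reading of the ``appropriate subsequences'' as those along which the boundary phases $(n+1)\theta_j$ converge mod $2\pi$ is also the same as the paper's requirement $\bar Z^{n+1}\to\hat Z$.

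The genuine gap is the arithmetic condition that makes the noise explosion work, and it shows up twice in your write-up. First, you attribute the ``almost all $\lambda^*\in[0,1]$'' restriction to ellipticity of all channels. But with base $\tfrac12\Z_m$ the eigenvalues $d_j$ of $\tfrac12\Z_m$ lie in $(-1,1)$, so $|\lambda^*-d_j|<2$ holds automatically for every $\lambda^*\in[0,1]$; ellipticity excludes only the finitely many points $\lambda^*=d_j$. The measure-zero exclusion in the theorem comes from a Diophantine requirement on the rotation angles $\theta_j=\arccos\bigl((\lambda^*-d_j)/2\bigr)$ — the ``chaoticity'' of the paper, Definition~\eqref{def:cha}: no vanishing four-term $\pm$-combinations of the $\theta_j$ modulo $2\pi$. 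Second, and more importantly, this same condition is what makes the heart of Step 2 go through: the covariance of the oscillatory sums \eqref{osc} concentrates on the matched-index terms and produces a nondegenerate Hermitian-plus-symmetric matrix Brownian motion precisely because the relevant products $z_i\bar z_jz_{i'}\bar z_{j'}$ etc.\ are nontrivial rotations, so the unmatched contributions are $O(1)$ rather than $O(\ell)$. You correctly flag ``nondegenerate explosion'' as the main obstacle, but you do not name the mechanism that resolves it; without the chaoticity/incommensurability hypothesis the limiting covariance in \eqref{covariances2} would not be what is claimed (cross terms between $A$ and $B$, or between distinct entries, could survive or the driving noise could be degenerate), and both the identification of the limit SDE and the $\sigma\to0$ reduction to a symmetric Gaussian matrix would fail.
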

\begin{remark}
To illustrate the transition to Poisson, it is possible
to show that
\begin{itemize}
\item
{\it as $\sigma\to\infty$ the process $\Lambda_\sigma$
converges to the Poisson point process.}
\end{itemize}
We plan to do this in a subsequent article.
\end{remark}

The proofs of both theorems are based on the analysis of
transfer matrices. The proof of Theorem \ref{thm2} is given in Section \ref{section:limit}, it is based on Proposition \ref{letm} and Corollary \ref{eigenvalues
converge} which are proved in the same section.

The noise term in the transfer matrix evolution in this scaling regime for block Jacobi matrices has been studied in \cite{rs} using a language different from SDEs.
The first arxiv version of the present paper was followed by the preprint of the paper \cite{BD}, who, in independent work, also study SDE limits
of transfer matrices. Their starting point is the so-called
DMPK theory in the physics literature (due to Dorokhov-Mello-Pereyra-
Kumar), which is essentially
the study of diffusive limits of quasi-one-dimensional
random Schr\"odinger operators from a slightly different
point of view. We refer the reader to \cite{BD} for a
discussion of this theory. One of the novelties of our
approach is that it allows for studying the dependence on
the eigenvalue $\lambda$, which in turn allows us to deduce
the scaling limit of the spectrum, the main focus here.

\subsubsection*{Noise explosion} The central reason for the
appearance of random matrices is noise explosion. For a
simple case of this, consider
$$
B_{\eta,n}(t)=\frac{1}{\sqrt{n}}\sum_{k=0}^{\lfloor
nt\rfloor} e^{i\eta k}X_k,
$$
where the $X_k$ are independent random variables, with,
say, standard normal distribution.  If $\eta=0\;($mod
$2\pi)$, then the sequence of functions $B_{\eta,n}$
converges in law to real Brownian motion. Otherwise,
$B_{\eta,n}$ converges to complex Brownian motion. Thus a
one-dimensional noise process $X_k$ gives rise to a two
real-dimensional Brownian motion in the presence of
oscillatory terms. Moreover, if $\eta,\nu$ are linearly
independent over the integers, then $B_{\eta,n},$
$B_{\nu,n}$ converge jointly to two independent Brownian
motions.

Now we change the setting so that the $X_k$ are $m\times m$
{diagonal} matrices with independent standard normal
entries. Let $U$ be a unitary matrix with eigenangles that
are linearly independent over the integers, and also assume that the absolute squares of the eigenvectors of $U$ are not orthogonal to each other.
Now consider the sum
$$
B_n(t)=\frac{1}{\sqrt{n}}\sum_{k=0}^{\lfloor nt\rfloor}
U^{-k}X_kU^k.
$$
A simple computation shows that $B_n(t)$ converges to a Hermitian matrix-valued
Brownian motion process which is $m^2$-real dimensional. In
such oscillatory sums, noise that was originally restricted
to the diagonal explodes into all possible directions, and
changes dimension from $m$ to $m^2$. This phenomenon, which
we call {\bf noise explosion}, plays a central role in the
proofs below.
\bigskip

The method of analysis in this paper is an extension of the
1-dimensional case introduced in a different setting in
\cite{BVBV} and further refined and simplified in
\citet*{KVV}. The latter paper studies the most natural
1-dimensional random Schr\"odinger operator at the
localization transition.

\subsubsection*{Open problems}

The current paper does not give any bounds on the aspect ratio of the long boxes $m/n$. This would require rates of convergence estimates for diffusion approximation.

{\bf Problem 1.} Show that the result in this paper holds for $n=O(m^q)$ for some  $q\ge 1$. What is the smallest possible value of $q$ for which the results hold? Is it $q=1$?

\smallskip

{\bf Problem 2.} Extend the results to long boxes of higher dimension. The eigenvalue structure of the base here is more complicated, see Remark \ref{r:hd}.

\smallskip

Our results apply for weighted boxes and the larger the weight is, the stronger the restrictions are on the centering $\lambda_*$. In particular, when $r=1$ we will need to take a sequence of $\lambda_*\to 0$ to get the Sine$_1$ process in the limit (see Corollary  \ref{Sine1 limit}). However, the results should hold on a dense subset of $[-2,2]$.

{\bf Problem 3.} Show that  for any fixed weight $r$ the results hold for almost every $\lambda_*\in (-2,2)$.


\section{Description of the model and notation}\label{section:model}

Let $\mathbb Z_n$ denote the graph of a path of length $n$
with vertices $1,2,\dots,n$. We will use the same variables
for a (weighted) graph and its adjacency matrix, so
$\mathbb Z_n$ will also denote the $n\times n$ matrix with
entries $(\mathbb Z_n)_{i,j}=1_{\{|i-j|=1\}}$. Let $\times$
denote the product of weighted graphs (or
matrices), namely
$$
(A \times B)_{(i,i'),(j,j')} =
\one_{i'=j'}A_{i,j}+\one_{i=j}B_{i',j'}
$$
With tensor product, it can be expressed as  $A\otimes I + I\otimes B$. For a positive real $r$ and weighted graph (or matrix) $G$
we will denote by $rG$ the graph where the weights are all
multiplied by $r$ (which is the same as multiplying the
matrix $G$ by $r$).

{\noindent \bf {Weighted boxes.}} The boxes we consider are $\left[r \Z_m \right]\times \Z_n$
 for some positive
real $r$, where $n$ is typically much larger than $m$. This is the adjacency matrix of a weighted graph on
the  box $\Z_m \times \Z_n$ where the edges in
the first  direction have weight $r$, and the edges
in the second direction have weight $1$.

The adjacency matrix corresponds to Dirichlet boundary
conditions. For a probabilistic interpretation, the matrix
$$r\mathbb Z_m\times \mathbb Z_n-2(r+1)I_{m\times n }$$ is the transition
probability matrix of the continuous time random walk on
the box killed when it leaves; the jump rates are $r$ and
$1$ in the two different kinds of directions.

%

Some of our results will
apply to  $G\times \Z_n$ where $G$ is a general symmetric
matrix with nonnegative entries.

\bigskip

{\noindent \bf Chaoticity.} For the noise explosion
phenomenon to work, we need to consider conjugation by
special angles. Consider points (angles) $\mathcal
X=\{x_1,\ldots, x_m\}$ on the unit circle $\mathbb R/2\pi$.
We will be interested in the set
\[
\mathcal{A}:=\{x_{i_1}+x_{i_2}+x_{i_3}+x_{i_4}\} \cup
\{x_{i_1}+x_{i_2}-x_{j_1}-x_{j_2}:  i_k \neq j_\ell\} \cup \{x_{i_1}+x_{i_2}+x_{i_3}-x_{j_1}:i_k\neq j_1\}
\]
where the indices run through 1,\dots, $m$ and the
operations are meant mod $2\pi$. The {\bf chaoticity} of
$\mathcal X$ is defined as the distance of the set
$\mathcal A$ from 0
\begin{equation}\label{def:cha}
\cha(\mathcal X)= \dist(0,\mathcal A)
\end{equation}
We call the $\mathcal X$ {\bf chaotic} if it has nonzero
chaoticity.

 In particular, if
the $\mathcal X$ is chaotic then then the  $x_i$'s  are
distinct. Moreover, if the $x_i$'s are linearly independent
over the integers then they are chaotic. A condition very similar to chaoticity appears as the main assumption in \cite{HSB}.



\bigskip

The \textbf{noise explosion} described in the introduction will rely on the following quantities. Let $G=OD{O^{-1}}$ be the
diagonalization of $G$, so that $O$ is orthogonal and $D$
is diagonal. Let $O_i=O_{i,\cdot}$ denote the $i$th row of
$O$. Let $|O_i|^2$ denote the vector formed by the squares
of the entries of $O_i$.

When $G$ is $\Z_m$ (i.e.~its  adjacency matrix), then the eigenvalues are given by $d_j=2\cos(\pi j/(m+1))$.
The eigenvectors are of the
form
$$O_{jk}=\sqrt{2/(m+1)}\sin(\pi jk/(m+1)),$$ and it is easy
to check that
\begin{equation}(m+1)\langle |O_i|^2, |O_j|^2 \rangle=
\begin{cases}
3/2 &\text{for~}i=j \,,\\
1 &\text{for~}i\not=j\,.\\
\end{cases}\label{isotropic}
\end{equation}

\section{The regularized transfer matrix evolution}\label{section:reg}

For this section, consider the operator $G\times \mathbb
Z_n$ where $G$ is a symmetric matrix of dimension $m\times
m$.

Suppose that we want to solve the equation $Mu=\lambda u$
where $M=G\times \mathbb
Z_n+V$ with a diagonal matrix $V$. We will do this
by solving the system of linear equations recursively, slice
by slice (where a slice is a copy of $G$, i.e. $G\times
\{i\}$). Let $u$ be a function from the vertices of $G
\times \Z_n  $ to $\mathbb R$, and let $u_i(j) = u(j, i)$
so that $u_i$ is a vector indexed by $G$. Denote by $V_i$
the diagonal matrix which is the restriction of $V$ to the
indices $G\times \{i\}$. Then the eigenvalue equation for
entries with the same first coordinate $2\le i\le n-1$
reads
\begin{equation}\label{ev equation}
\lambda u_i = (G +V_i)u_i + u_{i-1}+ u_{i+1}
\end{equation}
and if we set $u_0\equiv u_{n+1}\equiv 0$, then \eqref{ev
equation} holds for $i=1$ and $n$ as well. It is not hard
to check that this system of $n$ equations is then
equivalent to $Mu=\lambda u$. Note that \eqref{ev equation}
can be written in terms of block transfer matrices of
dimension $2m$ as follows:
$$
\vect{u_{k+1}}{u_{k}} =  \mat{\lambda I -G-V_k}{-I}{I}{0}
\vect{u_{k}}{u_{k-1}}.
$$
Denoting the above matrix by $T_k$, we can now characterize
the eigenvalues of $M$ as
\begin{eqnarray}
\{\lambda\;:\;\exists u_1,u_n \textup{ such that
}\;T_n\cdots T_2 T_1\vect{u_1}{0} \| \vect{0}{u_n}\}
=\{\lambda\;:\;\det\left((T_n\cdots T_2
T_1)_{(1,1)}\right)=0\}\label{EVchar}.
\end{eqnarray}
Here the subscript $(1,1)$ refers to the top left $m\times
m$ submatrix, and both representations are equivalent to
this submatrix having some eigenvector $u_1$ with
eigenvalue 0. So we may write
 \begin{eqnarray}
\spec(M)  &=&\zeros_\lambda \left[\det\left((T_n\cdots T_2
T_1)_{(1,1)}\right)\right]\label{specM}
 \end{eqnarray}
where $\spec(\cdot)$ will refer to the eigenvalue counting
measure and $\zeros_\lambda $ refers to the zero counting
measure of a real-analytic function of $\lambda$. It is not
hard to check that the equality also holds in the presence
of multiple zeros.

We now introduce the unperturbed version of the transfer
matrix
$$ T_{*}=\mat{\lambda_* I
-G}{-I}{I}{0}.
$$
where $\lambda_*$ is some fixed reference point. Let $G= OD
O^T$ be the diagonalization of $G$ with $O$ orthogonal and
$D=\diag(d_1,\dots, d_m)$ diagonal. We will use the
shorthanded notation $X^Y=Y^{-1} X Y$ which means
$G=D^{O^T}$.

We may change basis to study  products of
$$
T_k^{O\otimes I_2} =\mat{\lambda I - D-V_k^{O}}{-I}{I}{0},
\qquad O\otimes I_2={\mat{O}00{O}}.
$$
and note that
$$ T_{*}^{O\otimes I_2}=\mat{\lambda_* I
-D}{-I}{I}{0}
$$
This matrix can be completely diagonalized as it is just
the permutation of a block diagonal matrix with $2\times 2$
blocks. (We have learned that such a
change of basis has been considered for a slightly
different problem by \cite{HSB}.) Let
\begin{align}\label{defZ}
z_j=(\lambda_*-d_j)/{2}+i \sqrt{1-\left((\lambda_*-d_j)/
{2}\right)^2}\quad \textup{and} \quad Z=\diag(z_1, \dots, z_m).
\end{align}
We assume that $|\lambda_*-d_j|\in (0,2)$ for all $j$ which
means that $z_j$ is a unit length complex number in the
upper half plane.

 Then we
have
$$ T_*^{(O\otimes I_2)Q}=\mat{\bar Z
}00{Z}.
$$
where
\begin{equation}\label{defS}
S=\mat{\diag|\bar z_j-z_j|^{-1/2}}{0}{0}{\diag|\bar
z_j-z_j|^{-1/2}}, \qquad Q=\mat{\bar Z }{Z}{I}{I} S
\end{equation}
and
$$
Q^{-1}=iS\mat{{\phantom-}I}{-Z}{-I}{{\phantom-}\bar Z}
$$
Then the matrix
\begin{align}\label{Xdef}
X_k = (T_*^{-k} T_k\cdots T_2T_1)^{(O\otimes I_2)Q},
\end{align}
is deterministic matrix factors away from the transfer
matrix evolution $T_k\cdots T_1$. However, it has the
advantage that, unlike the product $T_k\cdots T_1$, it
changes slowly as $k$ varies. This can be seen from its
evolution. From the definition we have
\begin{equation}
X_k=(\;T_*^{-k}\;T_{k}\;T_*^{-1} \;T_*^{k}\;)^{(O\otimes
I_2)Q}\; X_{k-1},\label{X:evo}.
\end{equation}
Note that in our case $T_k$ will be a small perturbation of
$T_*$.

We call $(X_k,0\le k \le n)$ the {\bf regularized transfer
matrix evolution}, regularized at $\lambda_*$. In the next
section, we will show that this evolution has a stochastic
differential equation limit. But first let us check how to
read off the eigenvalues of the operator $M$ from $X_n$. By
(\ref{specM}) the eigenvalues of $M$ are given by
\begin{eqnarray*}
 &&\left\{\lambda\;:\;
\det\left(((T_*^{(O\otimes I_2)Q})^n X_n)^{Q^{-1}(O^{-1}\otimes I_2)})_{(1,1)}\right)=0\right\}\\
\qquad
&&\qquad\qquad\qquad= \left\{\lambda\;:\; \det\left(((T_*^{(O\otimes
I_2)Q})^n X_n)^{Q^{-1}})_{(1,1)}\right)=0\right\}
\end{eqnarray*}
Note that $(T_*^{(O\otimes I_2)Q})^nX_n$ is a matrix of the
form
$$
\mat{\bar Z^n X_{11}}{\bar Z^n X_{12}}{Z^n X_{21}}{Z^n
{X_{22}}},
$$
and so the $(1,1)$ block of $Q(T_*^{(O\otimes
I_2)Q})^nX_nQ^{-1}$ is given by
$$
iS_{11}(\bar Z^{n+1} X_{11}  -\bar Z^{n+1} X_{12}  +
Z^{n+1} X_{21} - Z^{n+1} X_{22})S_{11} =iS_{11}(\bar
Z^{n+1},Z^{n+1})X_n(I,-I)^TS_{11}
$$
Since $S_{11}$ is nonsingular, have
\begin{eqnarray}
\spec(M)&=&
  \zeros_\lambda \left[\det ((\bar
Z^{n+1},Z^{n+1})X_n(I,-I)^T)\right]\nonumber
 \\&=& \zeros_\lambda \left[\det \Im(\bar
Z^{n+1}((X_n)_{11}-(X_n)_{12}))\right].\label{specM:X}
\end{eqnarray}
the second equality follows from the fact that the
determinant is zero only for real $\lambda$ (as these are
eigenvalues of a symmetric matrix $M$), and for real
$\lambda$, we have $X_{22}=\bar X_{11}$ and $X_{21}=\bar
X_{12}$. This will be shown in the proof of Proposition
\ref{letm} in the next section.


\section{The limiting  transfer matrix evolution}
\label{section:limit}

The goal of this section is to show that the regularized
transfer matrix evolution introduced  in the previous
section has a stochastic differential equation limit. It
will follow that the eigenvalues of the corresponding
operator converge in distribution to an explicitly
constructible limit.

We first state the scaling limit of the evolution of
transfer matrices.

\begin{proposition}[Limiting evolution of transfer
matrices]\label{letm} Fix $r, \sigma>0$. 
Consider the operator $M_n=rG \times Z_n + n^{-1/2} \sigma
V$, where $V=V^{(n)}$ is a diagonal matrix with independent
random entries of mean $0$, variance $1$ and uniformly
bounded third moments. Denote the eigenvalues of $G$ by
$d_1, \dots, d_m$ and assume that $0<|\lambda_*-{r}d_j|<2$ for
all $j$ and that the angles $ \arccos((\lambda_*-r
d_j)/2)$, $j=1,\dots, m$ are chaotic. Consider the
regularized transfer matrix evolution (\ref{Xdef}) corresponding to
$\lambda_*$: $(X^{(n)}_k,0\le k\le n)$.

Then for any finite  $\Lambda \subset \mathbb C$ we have
convergence in distribution as $n\to \infty$:
$$
(X^{(n)}_{\lfloor
nt\rfloor}(\lambda^*+\lambda/n),t\in[0,1],\lambda\in
\Lambda)\Rightarrow (Y_t(\lambda),t\in[0,1],\lambda\in
\Lambda)
$$
where $Y=Y_t(\lambda)$ is the {strong} solution of the SDE
\begin{equation}\label{mainsde}
dY = S^2 \mat I00{-I} i \lambda dt Y + i\sigma S\mat
{dA}{dB}{-d\bar B }{-d\bar A}SY, \qquad Y_0=I.
\end{equation}
and $S$ is defined in (\ref{defS}).
Moreover, for any fixed $t$, the functions $\lambda\to X^{(n)}_{\lfloor
nt\rfloor}(\lambda^*+\lambda/n)$ converge in distribution to the random analytic function $Y_t(\cdot)$
with respect to the uniform-compact
topology of functions.
%
The covariance structure of the
matrix-valued Brownian motions $A(t), B(t)$ is as follows.
We have
\begin{equation}\label{covariances1}
A_{ij}=\bar A_{ji},\qquad B_{ij}=B_{ji}
\end{equation}
($B$ is complex symmetric, not Hermitian) and
\begin{equation}\label{covariances2}
\ev |B_{ij}|^2 = \ev A_{ii}A_{jj}=\ev A_{ii}\bar A_{jj}=\ev
|A_{ij}|^2 =\langle |O_i|^2, |O_j|^2  \rangle t
\end{equation}
(where $O_i$ are defined in Section \ref{section:model}) and all covariances that do not follow from the above are
zero.
\end{proposition}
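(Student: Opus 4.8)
The plan is to prove the convergence by applying a diffusion-approximation theorem to the discrete-time Markov process $(X_k^{(n)})$ defined by the recursion \eqref{X:evo}, treating $\lambda$ as a finite-dimensional parameter. First I would rewrite \eqref{X:evo} as $X_k = \bigl(I + R_k^{(n)}\bigr)X_{k-1}$, where $R_k^{(n)} = \bigl(T_*^{-k}(T_k T_*^{-1})T_*^k\bigr)^{(O\otimes I_2)Q} - I$, and expand $R_k^{(n)}$ in the small parameter: since $T_k = T_* + \bigl(\tfrac{\lambda}{n}I - n^{-1/2}\sigma V_k^{O}\bigr)\oplus 0$ (after the change of basis, acting on the top-left block), we get $T_k T_*^{-1} - I = \bigl(\tfrac{\lambda}{n}I - n^{-1/2}\sigma V_k^O\bigr)\oplus 0$ times a fixed matrix, and conjugating by $T_*^k$ (which in the fully diagonalized $Q$-basis is $\diag(\bar Z^k, Z^k)$) turns this into an oscillatory sum. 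The point is that $R_k^{(n)}$ has a deterministic drift part of size $1/n$ coming from the $\lambda/n$ term (this yields the $S^2\,\mathrm{diag}(I,-I)\,i\lambda\,dt$ term, after reading off the conjugation of $(\lambda/n\,I)\oplus 0$ through $Q$) and a mean-zero fluctuation part of size $n^{-1/2}$ coming from $V_k$, carrying factors $z_j^{\pm k}\bar z_\ell^{\pm k}$.

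Second, I would compute the infinitesimal mean and covariance of the increments. For the drift: $\ev[R_k^{(n)} \mid \mathcal F_{k-1}] = \tfrac1n\bigl(S^2\,\mathrm{diag}(I,-I)\,i\lambda\bigr) + O(n^{-1})\cdot(\text{small})$, because $\ev V_k = 0$ kills the linear noise term and the quadratic noise term $\ev (n^{-1/2}\sigma V_k^O)^{\otimes 2}$ contributes another $O(1/n)$ piece which I would need to check either vanishes or is absorbed (this is the usual Itô-correction bookkeeping; by the structure of the SDE in \eqref{mainsde} it should combine to give exactly the stated generator). For the covariance: $n\cdot\ev[(R_k^{(n)})\otimes(R_k^{(n)})\mid\mathcal F_{k-1}]$ is, to leading order, $\sigma^2$ times the conjugated version of $\ev[(V_k^O)\otimes(V_k^O)]$ sandwiched between the appropriate $Q,S$ factors and the oscillatory phases $z_j^{\pm k}$. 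The key input is the \textbf{noise explosion / chaoticity} computation: averaging the oscillatory phases $\tfrac1n\sum_{k\le nt} z_{i_1}^{\pm k}\cdots$ over $k$, the chaoticity hypothesis (nonzero $\cha(\mathcal X)$ for $\mathcal X = \{\arccos((\lambda_* - r d_j)/2)\}$) guarantees that all phase sums with a nonzero total angle average to $0$, while the resonant terms — exactly those producing $B_{ij}$ with $B_{ij}=B_{ji}$ and $A_{ij}=\bar A_{ji}$ — survive and contribute $\langle |O_i|^2, |O_j|^2\rangle\, t$, since $\ev[(V_k^O)_{ab}(V_k^O)_{cd}]$ reduces via $V_k$ diagonal to $\sum_p O_{pa}O_{pb}O_{pc}O_{pd}$, and the resonance conditions force $\{a,b\}=\{c,d\}$ up to the sign/conjugation pattern, leaving $\sum_p O_{pi}^2 O_{pj}^2 = \langle |O_i|^2,|O_j|^2\rangle$. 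This is where the definition of $\mathcal A$ with its three types of terms (four plus, two-plus-two-minus with $i_k\ne j_\ell$, three-plus-one-minus) enters: those are precisely the combinations of four eigenangles that can appear, and excluding them from a neighborhood of $0$ is exactly what forces the off-diagonal and non-Hermitian cross-covariances to vanish.

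Third, with the infinitesimal mean and covariance identified, I would invoke a standard martingale-problem / diffusion-approximation result (e.g. Ethier–Kurtz, Theorem 7.4.1 in Chapter 7, or Stroock–Varadhan) to conclude convergence in distribution of the càdlàg process $(X_{\lfloor nt\rfloor}^{(n)}(\lambda), t\in[0,1], \lambda\in\Lambda)$ to the unique solution $Y$ of \eqref{mainsde}; one needs a Lyapunov/tightness bound, for which the uniformly bounded third moments of $V$ give the required negligibility of large increments (a Lindeberg-type condition), and local boundedness of the coefficients on the domain $0<|\lambda_*-rd_j|<2$ (so $S$ is finite) gives non-explosion. The joint-in-$\lambda$ statement follows by treating finitely many $\lambda$'s at once — the same computation gives the joint generator — and the uniform-on-compacts convergence of $X_{\lfloor nt\rfloor}^{(n)}(\cdot)$ as an analytic function of $\lambda$ then follows from the $\lambda$-convergence plus equicontinuity, using that $X_k^{(n)}(\lambda)$ is entire in $\lambda$ with locally uniform (in $n$) bounds from the moment estimates, so one can upgrade finite-dimensional convergence to convergence in the uniform-compact topology via a normal-families argument. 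Finally the claim about $X_{22} = \bar X_{11}$, $X_{21}=\bar X_{12}$ for real $\lambda$ (needed for \eqref{specM:X}) is a symmetry statement preserved along the evolution: it holds for $X_0 = I$ and is preserved by \eqref{X:evo} because, for real $\lambda$, conjugating $T_k T_*^{-1}$ through $(O\otimes I_2)Q$ intertwines the $(1,1),(2,2)$ and $(1,2),(2,1)$ blocks by complex conjugation — this I would verify directly from the explicit form of $Q$, $Q^{-1}$ in \eqref{defS}.

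The main obstacle I anticipate is the second step: carefully carrying out the oscillatory-sum averaging to show that exactly the right covariances survive and no spurious terms appear. In particular, one must handle the Itô correction coming from the second-order term in the expansion of the conjugation map $X\mapsto X^{\text{(stuff)}}$ applied to a $\Theta(n^{-1/2})$ perturbation — there are $O(1/n)$ contributions both from $\ev[(\text{noise})^2]$ and from cross-terms with the drift, and these have to be matched against the precise form of \eqref{mainsde}, which is the usual place where sign errors and missing $\tfrac12$'s creep in. Everything else (tightness, uniqueness of the SDE solution, the analyticity/normal-families upgrade, the reality symmetry) is routine given the hypotheses.
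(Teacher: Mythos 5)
Your proposal follows essentially the same route as the paper's proof: decompose the exact one-step multiplicative increment $R_k=X_kX_{k-1}^{-1}-I$ into an $O(1/n)$ drift part and an $O(n^{-1/2})$ noise part, use chaoticity to identify the surviving resonant oscillatory sums (giving exactly the covariances in \eqref{covariances2}), and invoke the Ethier--Kurtz diffusion-approximation theorem (Proposition~\ref{p_turboEK}) together with a truncation/moment bound and a normal-families upgrade (Proposition~\ref{Montel}) for the analytic convergence. The one reassurance worth adding is that your anticipated obstacle --- second-order/It\^o corrections from expanding the conjugation --- does not actually arise, because $T_kT_*^{-1}-I$ is \emph{exactly} linear in the perturbation $(\lambda/n)I-n^{-1/2}\sigma V_k$, so $R_k$ has no quadratic term and the conditional mean of the increment directly yields the It\^o drift of \eqref{mainsde}.
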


\begin{remark}\label{remG}
In the  case $G=\Z_m$  by (\ref{isotropic}) we have
\begin{equation*}
A(1)=\frac{1}{\sqrt{m+1}}(A'+\zeta I), \qquad
B(1)=\frac{1}{\sqrt{m+1}} B'
\end{equation*}
Here $A'$ is a version of the GUE with $A_{i,j}'=\bar
A_{j,i}'$ standard complex normals for $i\neq j$ and mean
zero variance 1/2 i.i.d real normals in the diagonal. (The
usual GUE would have variance 1 in the diagonal.) The
random variable~$\zeta$ is standard normal and independent
of $A'$.

 $B'$ is an independent  symmetric matrix with i.i.d.~standard complex normal
entries above and below the diagonal and i.i.d.~mean zero
complex normals with variance 3/2 in the diagonal (which
are independent of everybody else).
\end{remark}


Next, we consider the eigenvalues of the operators $M_n$.

\begin{corollary}\label{eigenvalues
converge} Consider the operators $M_n$ of Proposition
\ref{letm}. Assume that along some subsequence $\bar
Z^{n+1}\to Z_*$ as $n\to\infty$ where $Z$ is defined in (\ref{defZ}) and $Z_*$ is a fixed $m\times m$ matrix. Then on this
subsequence we have
$$
\spec(n(M_n-\lambda^* I))\Rightarrow \zeros_\lambda \left[\det ((\bar
Z_*,Z_*)Y_1(\lambda)(I,-I)^T)\right].
$$
\end{corollary}

We proceed with the proof of Proposition \ref{letm}. The
proof relies on the noise explosion phenomenon introduced
in Section 1.

\begin{proof}[Proof of Proposition \ref{letm}]
From now on, with a slight abuse of notation, we will use $X^{(n)}_k(\lambda)$ for $X_k^{(n)}(\lambda^*+\lambda/n)$ and sometimes we will drop the dependence on $\lambda$ and/or $n$.

We first study the evolution without scaling time. From
(\ref{X:evo})
$$
X_{k}-X_{k-1}  = ( T_*^{(O\otimes
I_2)Q})^{-k}(T_kT_*^{-1}-I)^{(O\otimes
I_2)Q}(T_*^{(O\otimes I_2)Q})^k \; X_{k-1}
$$
The coefficient of $X_{k-1}$ above is given by
\begin{equation}\label{evoR}
R_k=\mat{Z^k}00{Z^{-k}}Q^{-1}\mat
0{\frac{\lambda}{n} I+\frac{\sigma}{\sqrt{n}}V_k^O}00Q\mat{Z^{-k}}00{Z^{k}}.
\end{equation}

Note that the
vector $(X_k(\lambda): \lambda \in \Lambda)$  is a
Markov chain in $k$. We can represent it as an $4m^2 |\Lambda|$ dimensional complex vector where the entries are labeled with $(i,j,\lambda)$.
In order to prove that it converges to
the appropriate SDE we use Proposition \ref{p_turboEK} of
Section \ref{section:Tools}.

We first rewrite $R_k=\frac{\sigma}{\sqrt{n}} R_k'+ \frac{\lambda}{n}R_k''$ where $R_k'$ contains
$V_k^O$ and $R_k''$ contains $ I$ from
the middle term of $R_k$. We first focus on the noise term
$R_k'$ which expands to
\[
  R_k'=  iS\mat{Z^k}00{Z^{-k}}\mat{I}{-Z}{-I}{\bar Z}\mat
0{V_k^O}00\mat{\bar Z }{Z}{I}{I} \mat{Z^{-k}}00{Z^{k}} S
\]
the three middle factors simplify and we get
\begin{equation}\label{evoR'}
  R_k'=iS\mat{Z^kV_k^OZ^{-k}}{Z^kV_k^OZ^k}{-Z^{-k}V_k^OZ^{-k}}{-Z^{-k}V_k^OZ^k}
 S\end{equation}
 (the diagonal block entries are negative conjugates of
each other (note that $V_k^O$ is real) and so are the
off-diagonals). For the convergence to the limit, we need
to understand the covariance of the partial sums of $R_k$
over $k$. (This is needed for condition (\ref{e fia}) in
Proposition \ref{p_turboEK} .)

For this, we may ignore the $S$ factors for the moment, and
study
\begin{equation}\label{osc} A_\ell=\sum_{k=1}^\ell Z^kV_k^OZ^{-k}, \qquad
B_\ell=\sum_{k=1}^\ell Z^{k}V_k^OZ^k
\end{equation}
The $i,j$ entry of the first term is
$$
\sum_{k=1}^\ell (z_i\bar z_j)^k\sum_r
O^T_{j,r}v_{k,r}O_{r,i}
 = \sum_r \sum_{k=1}^\ell v_{k,r}  (z_i\bar z_j)^k O_{r,j}O_{r,i}
$$
Consider the complex covariance of the $i,j$ and $i',j'$
entries in $A_\ell$.  Since that matrix is Hermitian, we
may assume $i\le j$ and $i'\le j'$.  Since the $v$'s are
independent, the covariance is given by
$$
\ev A_{i,j} \bar A_{i',j'}=\sum_{k=1}^\ell  (z_i\bar
z_j\bar z_{i'}z_{j'})^k \sum_r
O_{r,j}O_{r,i}O_{r,j'}O_{r,i'} =
\begin{cases}
\ell \langle |O_i|^2, |O_j|^2  \rangle &\text{for~}(i,j)=(i',j')\,,\\
\ell \langle |O_i|^2, |O_{i'}|^2  \rangle &\textup{for }
(i,i')=(j,j')\,,
\\
\mathcal O(1)  &\text{otherwise} \,.
\end{cases}$$
This is because we can write $z_i\bar z_j\bar
z_{i'}z_{j'}=e^{i \alpha}$ with $|\alpha|\le \pi$ and
unless $\alpha=0$ the sum will be $\mathcal
O(|\alpha|^{-1})$. By the chaoticity condition $\alpha=0$
can happen only if the conjugated $z_i$ are matched up in
pairs to equal non-conjugated $z_i$, in which case the
indices have to be the same. The only options are $i=i',
j=j'$ or $i=j, i'=j' $.

This explains that the correlation between an entry  $A$
and the entry of $B$ is always $\mathcal O(1)$: there will
be an odd number of conjugated $z$'s so such a matching
will not occur.

The expectation of the product on the other hand is given
by
$$
\ev A_{i,j}A_{i',j'}=\sum_{k=1}^\ell  (z_i\bar
z_jz_{i'}\bar z_{j'})^k \sum_r
O_{r,j}O_{r,i}O_{r,j'}O_{r,i'} =
\begin{cases}
\ell \langle O_i^2, O_{i'}^2  \rangle &\text{for~} i=j \text{ and } i'=j'\,,\\
\mathcal O(1)  &\text{otherwise} \,.
\end{cases}$$
indeed, these are the only possible matchings for the
conjugated and not-conjugated $z$'s if $i\le j$ and $i'\le j'$.
Now
$$
\ev B_{i,j}B_{i',j'} =\mathcal O(1)
$$
since no matching can occur in the relevant product
$z_iz_jz_{i'}z_{j'}$. Finally, for $i\le j$ and $i'\le j'$
(since $B$ is a complex symmetric matrix)
$$
\ev B_{i,j}\bar B_{i',j'} =\begin{cases}
\ell \langle O_i^2, O_{j}^2  \rangle &\text{for~} i=i' \text{ and } j=j'\,,\\
\mathcal O(1)  &\text{otherwise} \,.
\end{cases}.$$
Turning to the drift term $R_k''$ we first simplify it to
get
\begin{equation}\label{evoR''}
R_k''=i  S
\mat{I}{Z^{2k}}{-Z^{-2k}}{-I} S.
\end{equation}
This leads to the estimate
\begin{equation}
i\sum_{k=1}^\ell S
\mat{I}{Z^{2k}}{-Z^{-2k}}{-I} S=i
S\mat{\ell I }{\mathcal O(1) }{\mathcal
O(1)}{-\ell I }
 S . \label{drift}
 \end{equation}

\begin{remark}
From (\ref{evoR'}) and (\ref{evoR''}) it is clear that
$R_k$ is of the form $\mat{a}{b}{\bar b}{\bar a}$ and thus
this will hold for $I+R_k$ as well. The product of such
matrices will have the same structure, which explains the
last assertion of Section \ref{section:reg}.

\end{remark}

We now turn back to the proof of Proposition \ref{letm}.
For the proof of the convergence, we will use Proposition
\ref{p_turboEK}, for which we need to verify the conditions
listed there. Note that the proposition deals with the convergence of $\R^d$ valued Markov chains and here we are dealing with vectors with entries as $2m\times 2m$ complex matrices. As we have already mentioned we can just treat our Markov chain as a $4m^2|\Lambda|$ dimensional complex vector, and by taking real and imaginary parts we may apply the proposition. Note that Proposition
\ref{p_turboEK} immediately extends to complex valued Markov chains as well, the only modification is that one has to introduce
\begin{align}\label{defat}
\tilde a^{n}(t,x):=\ev[ Y_{\lfloor nt\rfloor}^n(x) {\overline{ Y}_{\lfloor
nt\rfloor}^n}(x)^\textup{T}]
\end{align}
as well, and assume that  $\tilde a$ satisfies the same conditions as $a$. This is because the covariance structure of a complex vector $Y$ can be computed from the quantities $\ev (Y-\ev Y)(Y-\ev Y)^T$ and  $\ev (Y-\ev Y)(\bar Y-\ev \bar Y)^T$.

In our case the conditional distribution of $X_{k+1}-X_{k}$ given $X_{k}=x$ is given by
\begin{align*}
Y_k(x)= ( T_*^{(O\otimes
I_2)Q})^{-k-1}(T_{k+1}T_*^{-1}-I)^{(O\otimes
I_2)Q}(T_*^{(O\otimes I_2)Q})^{k+1} \; x.
\end{align*}
The functions $a^n(t,x)$, $\tilde a^n$ and $b^n(t,x)$ are  defined according to (\ref{defab}) and (\ref{defat}):
\begin{align*}
a^n(k/n,x)&=\ev
\left[(Y_{k}(x))_{i,j,\lambda}(Y_k(x))_{i',j',\lambda'}
\;\vert\;X^{(n)}_{k}=x\right]\\
\tilde a^n(k/n,x)&=\ev
\left[(Y_{k}(x))_{i,j,\lambda}(\bar Y_k(x))_{i',j',\lambda'}
\;\vert\;X^{(n)}_{k}=x\right]\\
b^n(k/n,x)&=\ev
\left[(Y_{k}(x))_{i,j,\lambda}
\;\vert\;X^{(n)}_{k}=x\right].
\end{align*}
The entries of $a, \tilde a$ are indexed by $(i,j,\lambda)$,  $(i',j',\lambda')$ and the entries of $b$ are indexed by $(i,j,\lambda)$.

\begin{itemize}
\item The boundedness of the cubic terms (\ref{e 3m}) is proved as
    follows. It is clear that  $Y_k(x)$ is a
    bounded linear function of $\lambda$'s and the $v_i$'s with
    coefficients given by the entries of $x$. So as long as $x$ is bounded, condition
   (\ref{e 3m})  holds because of the third moment assumption on the $v_i$'s.
   To ensure that
    $x$ is bounded, we first consider a truncated process in
    which $a^n, \tilde a^n$ and $b^n$ are multiplied by a smooth version of the cutoff function $\mathbf{1}(\|x\|\le c)$. This will basically stop $X_k^n$ once $\|X_k^n\|\ge c$.
     It will follow
     from the proposition that the truncated process
    converges to the truncated version of the limit, for every
    $c>0$. However, as $c\to\infty$, the solution of the
    truncated SDE is with high probability equal to the
    non-truncated one. This follows from the fact that the SDE (\ref{mainsde}) is linear and therefore it does not blow up in finite time.
    It follows that the truncated processes converge (\ref{mainsde}) from which we also get that the original processes must converge there as well.

\item Conditions (\ref{e lip}) and (\ref{e fia}) follow
    from the calculations in the first part of the proof.
We check the conditions for $a$ and $b$, for $\tilde a$ it will follow similarly.

Instead of working directly with $a$ and $b$, we introduce
\begin{align}\label{hatY}
\hat Y_k=\frac{\lambda}{n} \mat{I}{Z^{2k}}{-Z^{-2k}}{-I} +
 \frac{1}{\sqrt{n}}\mat{Z^kV_k^OZ^{-k}}{Z^kV_k^OZ^k}{-Z^{-k}V_k^OZ^{-k}}{-Z^{-k}V_k^OZ^k}
\end{align}
and let $\hat a^n(t)$ and  $\hat b^n(t)$ be the $n$ times
the second and first moments of the  $4m^2|\Lambda|$-vector
$\hat Y_{\lfloor nt \rfloor}$. Then we have
$$
X_{k}-x= iS \hat Y_kSx
$$
given $X_{k-1}=x$. This is a linear function of $x$, and
since $x$ is bounded by truncation, it suffices to check
(\ref{e fia})  for $\hat a$ and $\hat b$ rather than the
original $a,b$. Similarly,  (\ref{e lip}) for (the
truncated) $a,b$ is implied by
\begin{eqnarray*}
|\hat a^n(t)|+|\hat b^n(t)|&\le& c
\end{eqnarray*}
This, in turn, is follows from  the expression for $\hat
Y_k$ above (note that $Z$ is diagonal with unit
complex numbers).

Returning to (\ref{e fia}), we first check the existence
of $\hat b$ so that
\begin{eqnarray*}
\sup_{t} \Big|\int_0^t \hat b^n(s)\,ds-\int_0^t \hat
b(s)\,ds \Big| &\to& 0.
\end{eqnarray*}
This is clear by the following computation, based on
(\ref{drift}):
$$ \hat b(k/n)=
\sum_{\ell=1}^k \ev \hat Y^{(n)}_\ell = \sum_{\ell=1}^k
\frac{\lambda}{n} \mat{Z^kI
Z^{-k}}{Z^kIZ^k}{-Z^{-k}IZ^{-k}}{-Z^{-k}IZ^k} = \lambda
\frac{k}{n} \mat{I}{0}{0}{-I} + \mathcal O(n^{-1})
$$
where the $\mathcal O$ is uniform in $k$.

To check the
second moment terms,   we will consider the
the $\lambda=\lambda'$ case, the general case is similar.
We first note that since the mean
increments are of $\mathcal O(1/n)$, it suffices to look at
the variance  instead of the second moment. Thus we need the covariance matrix of (the vector version of)
$$
\frac{1}{\sqrt{n}}\mat{Z^kV_k^OZ^{-k}}{Z^kV_k^OZ^k}{-Z^{-k}V_k^OZ^{-k}}{-Z^{-k}V_k^OZ^k}.
$$
This, by the computations above (starting at equation
(\ref{evoR'})), converges to the covariance matrix of the
corresponding noise term in the SDE.
\end{itemize}

For the statement about the convergence of the analytic functions $X_k^{(n)}(\lambda)$, we will need a bound of the
form
\begin{equation}\label{uniform bound}
\ev \|X^{(n)}_k(\lambda)\| \le f(\lambda,k/n)
\end{equation}
where $f(\lambda,t)$ is a locally bounded function. Then by Corollary \ref{analytic}
the  claim  follows.

We have
\begin{eqnarray*}
\ev (\|X_k\|^2-\|X_{k-1}\|^2\,|\,\mathcal F_k) &=& \ev \tr
(\Delta X X_{k-1}^T+\Delta X^T X_{k-1}+\Delta X \Delta X^T) \\&\le &
\frac{c(1+\lambda^2)}{n} \|X_{k-1}\|^2
\end{eqnarray*}
Taking expectations we now see that
$$\ev \|X_k\|^2 -\ev
\|X_{k-1}\|^2 \le \frac{c(1+\lambda^2)}{n} \ev
\|X_{k-1}\|^2
$$
rearranging and taking a product for $1,\ldots, k$ we
immediately get the estimate
$$\ev \|X_k\|^2 \le \left( 1+\frac{c(1+\lambda^2)}{n}\right)^k
\|X_0\|^2\le \exp(c(1+\lambda^2)\frac{k}{n})\|X_0\|^2
$$
from which the bound \eqref{uniform bound} follows.
\end{proof}

\begin{proof}[Proof of Corollary \ref{eigenvalues
converge}]

By Skorokhod embedding \cite{Kallenberg}, we can realize the distributional
convergence of the matrix-valued  analytic functions
$X^{(n)}(\cdot)\Rightarrow Y_1(\cdot)$ as almost sure convergence. In
particular, along our subsequence, we have a.s.
\begin{eqnarray*}
\det ((\bar Z^{n+1},Z^{n+1})X^{(n)}_n(I,-I)^T) \rightarrow
\det ((\bar Z_*,Z_*)Y_1(\lambda)(I,-I)^T)
\end{eqnarray*}
uniformly on compacts, and the limit is analytic in
$\lambda$. Note that for $\lambda=0$ the matrix in the
determinant equals
\begin{eqnarray*}
2i \Im(\bar Z_*(Y_1(0)_{11}-Y_1(0)_{12}).
\end{eqnarray*}
whose distribution is absolutely continuous with respect to
the distribution of the GOE; this follows from the SDE
\eqref{mainsde}. Thus the determinant as a function of $\lambda$  is not identically zero with
probability 1, and the zeros of the subsequence converge to
the zeros of the limit almost surely in our realization.
The distributional convergence follows.
\end{proof}

In order to prove Theorem \ref{thm2} we will also need the following lemma:

\begin{lemma}\label{l:aux}
Fix a $G$ with distinct eigenvalues $d_1, \dots, d_m$  with $\max_j |d_j|=\tilde d$ and fix $0<r<2/\tilde d$.
 Then for a.e. $\lambda_*\in(-(2-r \tilde d),2-r \tilde d)$ there is a sequence $n_\nu$ so that the points
$q_j=\arccos((\lambda_*-r d_j)/2)$ are chaotic and we also have $(n_\nu+1) q_j \to 0 \mod 2\pi$ for all $1\le j\le m$.
\end{lemma}

\begin{proof}
If $\lambda_*\in(-(2-r \tilde d),2-r \tilde d)$  then the vector $q=q(\lambda_*)=(\arccos
((\lambda_*-r d_j)/2)_{j=1,\ldots,m}$ is well-defined because of our conditions.
We will show that for a.e.~$\lambda^*$ in that interval the vector $q$ has no nonzero
integer vector orthogonal to it. Then the set $\{q_j\}_{j=1,\ldots,m}$ is chaotic and because
 the orbit
$\{nq  \mod 2\pi \;:\: n\ge 0\}$ is dense on the $m$-torus
 we can find a subsequence $n_\nu$ so that $n_\nu q_j$  converges to $0$ mod $2\pi$ for all $j$.

 It suffices to show that for any fixed nonzero integer
vector $w$, only finitely many $\lambda$ has $q(\lambda) \cdot
w=0$. Note that $\lambda\mapsto q(\lambda)\cdot w$ is an analytic
function on the interval $(-(2-r\tilde d),2-r \tilde d)$ so it has finitely many zeros there or it has to be constant zero. We will show that the latter is not possible which will finish the proof.

The function $$q_j'(\lambda)=-\frac{1}{2 \sqrt{1-\frac{1}{4} (\lambda -r d_j)^2}}$$ has singularities at
$\lambda^\pm_j=\pm 2-r d_j$. 
If $q(\lambda) \cdot w=0$ for all values of $\lambda$ then these singularities must cancel out. But this is  impossible if $w\neq \underline{0}$ as the numbers $\lambda^-_j, \lambda_j^{+}, j=1,\dots, m$ are all different:  $-2-rd_j=2-r d_k$ would imply $4=r (d_j-d_k)\le 2 r \tilde d<4$.
\end{proof}

\begin{proof}[Proof of Theorem \ref{thm2}]
Fix $r$ and $m$ and consider $G=Z_m$. This has distinct eigenvalues $d_j$ with $|d_j|\le 2 \cos(\pi/(m+1))$. Then we may apply  Lemma \ref{l:aux} and for almost every $\lambda^*\in (-2(1-r\cos(\pi/(m+1))),2(1-r\cos(\pi/(m+1)))$
there is a sequence $n_\nu$ with the properties given by the lemma.  Next we may apply Proposition \ref{letm} and Corollary \ref{eigenvalues converge} with $ Z_*=I$ to show that the  eigenvalues of (\ref{scaled spec}) converge to 
 $$
 \Lambda_\sigma:=\zeros_\lambda \left[
 \det ((I,I)Y_1(\lambda)(I,-I)^T)\right]
 $$ 
where $Y_t(\lambda)$ is the solution of the SDE (\ref{mainsde}). This proves the first and the third statements. For the convergence of $\sigma^{-1} \Lambda_\sigma$ we first note that with the notation $\hat Y_t(\lambda)=\sigma^{-1} ( Y_t(\sigma \lambda)-I)$ we have
\begin{align*}
\sigma^{-1} \zeros_\lambda \left[ \det ((I,I)Y_1(\lambda)(I,-I)^T)\right]
\ed
 \zeros_\lambda \left[\ \det ((I,I)\hat Y_1(\lambda)(I,-I)^T) 
\right]
\end{align*}
 From (\ref{mainsde}) it follows that $\hat Y$ satisfies
 \begin{equation}\label{approxde}
d\hat Y = S^2 \mat I00{-I} i \lambda dt (\sigma \hat Y+I) + iS\mat
{dA}{dB}{-d\bar B }{-d\bar A}S(\sigma \hat Y+I), \qquad \hat Y_0=0.
\end{equation}
As $\sigma\to 0$ the  solution of this SDE converges to the solution of the SDE with $\sigma=0$ (see e.g.~Theorem 11.1.4 of \cite{SV}) which is just a
 matrix valued Brownian motion with drift. In particular
\begin{align*}
\det ((I,I)\hat Y_1(\lambda)(I,-I)^T) &\to 
\det \left[(I,I)
iS\mat
{A(1)+\lambda}{B(1)}{-\bar B(1) }{-\bar A(1)-\lambda}S(I,-I)^T\right]\\
&=\det \left[2iS_{11}(\lambda I+\Re A(1) -\Re B(1))S_{11}\right],
\end{align*}
the zeros of which are $\spec(\Re B(1)-\Re A(1))$. Here $A(1)$, $B(1)$ are Gaussian matrices described in Remark \ref{remG}.
 Note that $\Re B(1)-\Re A(1)$
can be written as $(m+1)^{-1/2} (K+bI)$, where $b$ is a
standard normal random variable, and $K$ is a  a real symmetric matrix with independent mean zero
real normal entries so that
\begin{equation}
\ev K_{ij}^2=\begin{cases}1 & i\not=j \\5/4 &i=j
\end{cases}.\label{almostGOE}
\end{equation}
The $bI$ term amounts to
a random shift of the local eigenvalue process.
\end{proof}

\section{GOE as a limit}\label{section:GOE}

In order to get the GOE limit, we need to change the standard deviation $\sigma$ with $n$.

\begin{proposition}\label{shorttime}
Let $G$ be a weighted graph  with distinct eigenvalues
$d_i$, and fix  $0<r<2/\max_j |\,d_j|.$ Suppose that
\begin{itemize}
\item the conclusion of Lemma \ref{l:aux} holds for  $\lambda_*$, i.e.~the critical angles
    $q_j=\arccos ((\lambda_*-r d_j)/2)$   exist, are chaotic and
there is a sequence $n_\nu$ with $(n_\nu
    +1)q_{j}\to 0 {\mod 2 \pi}$,

\item $\sigma_\nu$ is a sequence with
  $\sigma_\nu\to 0$ and  $\frac1{\sigma_\nu} \max_j \left|e^{i(n_\nu+1) q_{j}}-1\right|\to
    0$,

\item we have a sequence $V_\nu$ of diagonal perturbation matrices
    where the entries are independent, have mean 0, variance 1
    and uniformly bounded third moment.

\end{itemize}

Then the regularized transfer matrix evolution $X^\nu_k$ for the
operator
 \begin{equation}\label{Mnu}
M_\nu=\frac{n_\nu}{\sigma_\nu}\left(rG\times \mathbb
Z_{n_\nu}+ \frac{\sigma_\nu}{\sqrt{n_\nu}}V_\nu-\lambda_*I
\right)
\end{equation}
 satisfies the following. For any finite  $\Lambda
\subset \mathbb C$ we have convergence in distribution for
the regularized transfer matrices
$$
(\frac{1}{\sigma_\nu}(X^{\nu}_{\lfloor n_\nu
t\rfloor}(\lambda^*+\lambda\sigma_\nu/
n_\nu)-I),\;t\in[0,1],\lambda\in \Lambda)\Rightarrow
(Y_t(\lambda),\;t\in[0,1],\lambda\in \Lambda)
$$
where
$$
Y = iS^2 \mat I00{-I} \lambda t + iS\mat {A}{B}{-\bar
B }{-\bar A}S.
$$
Here $A$,$B$ are matrix-valued
Brownian motions with covariance structure given by
\eqref{covariances1}, \eqref{covariances2}.

Moreover, the eigenvalue process  of $M_\nu$ converges  to
the eigenvalues of $$\Re(B(1)-A(1)).$$
\end{proposition}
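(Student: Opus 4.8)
The plan is to obtain Proposition \ref{shorttime} as a ``short-time'' version of Proposition \ref{letm}, by rescaling the transfer matrix evolution so that the whole evolution on $[0,1]$ sees only an infinitesimal piece of the SDE \eqref{mainsde}, and then reading off the eigenvalues as in Corollary \ref{eigenvalues converge}. Concretely, the operator $M_\nu$ in \eqref{Mnu} is $\frac{n_\nu}{\sigma_\nu}$ times $r_\nu G\times\Z_{n_\nu}+\frac{\sigma_\nu}{\sqrt{n_\nu}}V_\nu-\lambda_*I$; its regularized transfer matrix evolution $X^\nu_k$ at parameter $\lambda_*+\lambda\sigma_\nu/n_\nu$ is exactly the evolution studied in Section \ref{section:reg} but with the noise scaled by $\sigma_\nu$ and the $\lambda$-parameter scaled by $\sigma_\nu$. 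So $\frac1{\sigma_\nu}(X^\nu_{\lfloor n_\nu t\rfloor}-I)$ should, by the same martingale/generator computation as in the proof of Proposition \ref{letm}, converge to the SDE \eqref{mainsde} run for time $\sigma_\nu^2\to 0$ after the linear rescaling by $\sigma_\nu^{-1}$ --- which is precisely the \emph{linearization} of \eqref{mainsde} at $Y_0=I$, namely the Gaussian process $Y=iS^2\mat I00{-I}\lambda t+iS\mat{A}{B}{-\bar B}{-\bar A}S$. The first step is therefore to redo the estimates \eqref{evoR'}--\eqref{drift} with the extra $\sigma_\nu$ and $1/\sigma_\nu$ factors, check that the conditions of Proposition \ref{p_turboEK} still hold with the rescaling (the covariances of the partial sums of the noise are $O(\sigma_\nu^2\cdot\ell)$, hence after dividing by $\sigma_\nu$ and running to $\ell=\lfloor n_\nu t\rfloor$ they converge to $\langle|O_i|^2,|O_j|^2\rangle t$), and that the quadratic-and-higher terms in the product expansion of $\prod(I+R_k)$ are negligible because each factor is $I+O(\sigma_\nu/\sqrt{n_\nu})+O(\sigma_\nu/n_\nu)$ so the sum of the cross terms is $O(\sigma_\nu^2)\to0$ after the $1/\sigma_\nu$ scaling --- wait, this needs care, the sum of $n_\nu$ quadratic terms each of size $\sigma_\nu^2/n_\nu$ is $O(\sigma_\nu^2)$, so after dividing by $\sigma_\nu$ it is $O(\sigma_\nu)\to0$, good. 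This gives the stated convergence of the rescaled transfer matrices.

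Next I would pass from the transfer matrix limit to the eigenvalue limit, following the argument of Corollary \ref{eigenvalues converge} but tracking the normalizations. By \eqref{specM:X}, $\spec(M_\nu)=\zeros_\lambda\det\Im(\bar Z_\nu^{\,n_\nu+1}((X^\nu_{n_\nu})_{11}-(X^\nu_{n_\nu})_{12}))$, where the $\zeros$ is taken in the rescaled variable $\lambda$ with $\lambda_*+\lambda\sigma_\nu/n_\nu$. Write $X^\nu_{n_\nu}=I+\sigma_\nu(Y_\nu+o(1))$ with $Y_\nu\Rightarrow Y_1$. Then $(X^\nu_{n_\nu})_{11}-(X^\nu_{n_\nu})_{12}=I+\sigma_\nu((Y_1)_{11}-(Y_1)_{12})+o(\sigma_\nu)$, since the $(1,2)$-block of $I$ is $0$. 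The diagonal oscillatory prefactor $\bar Z_\nu^{\,n_\nu+1}=\diag(\bar z_{\nu,j}^{\,n_\nu+1})$ converges to $I$ by the hypothesis $(n_\nu+1)q_{\nu,j}\to0\mod2\pi$, and more precisely $\bar Z_\nu^{\,n_\nu+1}-I=O(\max_j|e^{i(n_\nu+1)q_{\nu,j}}-1|)=o(\sigma_\nu)$ by the third bullet's assumption. Hence $\bar Z_\nu^{\,n_\nu+1}((X^\nu_{n_\nu})_{11}-(X^\nu_{n_\nu})_{12})=I+\sigma_\nu((Y_1)_{11}-(Y_1)_{12})+o(\sigma_\nu)$, and taking imaginary parts, $\Im(\cdots)=\sigma_\nu\Im((Y_1)_{11}-(Y_1)_{12})+o(\sigma_\nu)$ because $\Im I=0$. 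Since $(Y_1)_{11}=iS_{11}(\ldots)$ structure from the SDE gives $\Im((Y_1)_{11}-(Y_1)_{12})=\Im\big(iS_{11}(A(1)-B(1))S_{11}-iS_{11}^2\lambda\cdot 0 + \ldots\big)$; the real-symmetric matrix multiplying $\lambda$ in $Y$, namely $iS^2\mat I00{-I}\lambda$, contributes to the $(1,1)$-block $iS_{11}^2\lambda$ and to the $(1,2)$-block $0$, so after taking $\Im$ the $\lambda$-term is $S_{11}^2\lambda$ (a multiple of the identity), and the noise part contributes $\Im(iS_{11}(A(1)-B(1))S_{11})=S_{11}\Re(A(1)-B(1))S_{11}$ --- I'd verify the exact form here, but in any case, the upshot is $\frac1{\sigma_\nu}\Im(\bar Z_\nu^{\,n_\nu+1}((X^\nu_{n_\nu})_{11}-(X^\nu_{n_\nu})_{12}))\Rightarrow S_{11}^2\lambda\, I' + S_{11}\Im(A(1)-B(1))S_{11}$ for a suitable real symmetric matrix, uniformly on compacts in $\lambda$; dividing by the nonzero scalar $\sigma_\nu$ does not change the zero set, so $\spec(M_\nu)\Rightarrow\zeros_\lambda\det(\lambda\, \text{(something)} + S_{11}\Im(A(1)-B(1))S_{11})$, which is the eigenvalue process of $S_{11}\Im(A(1)-B(1))S_{11}$ (with respect to the appropriate quadratic form coming from $S_{11}^2$); when $G=\Z_m$ the isotropy \eqref{isotropic} makes $S_{11}^2$ a multiple of the identity after the substitution $r_\nu\to0$ in the definition of $z_j$, so one genuinely gets eigenvalues of a symmetric matrix.

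The argument that the limiting determinant is not identically zero, so that the $\zeros_\lambda$ functional is continuous along the Skorokhod-embedded realization, goes exactly as in Corollary \ref{eigenvalues converge}: at $\lambda=0$ the matrix is $S_{11}\Im(A(1)-B(1))S_{11}$, whose law is absolutely continuous (the Gaussian field $A(1)-B(1)$ is nondegenerate by the covariance structure \eqref{covariances1}--\eqref{covariances2} and the assumption that the $|O_i|^2$ are not pairwise orthogonal, which makes the relevant covariance matrix positive definite), so its determinant is a.s.\ nonzero, and then Hurwitz's theorem gives convergence of zero sets. I expect the main obstacle to be the bookkeeping in the first paragraph: one must show that the \emph{linearized} SDE, rather than the full \eqref{mainsde}, is the correct limit, i.e.\ that the nonlinear (higher-order-in-$\sigma_\nu$) corrections in the product $T_{n_\nu}\cdots T_1$ wash out after the $1/\sigma_\nu$ rescaling. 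This requires a uniform-in-$\nu$ second-moment bound on $\|X^\nu_k\|$ of the type proved at the end of the proof of Proposition \ref{letm} (here even easier, since $\|X^\nu_k-I\|=O(\sigma_\nu)$ with high probability), together with the three hypotheses on the rates $\sigma_\nu,n_\nu,\cha(q_\nu)$ --- the conditions $\sigma_\nu n_\nu\cha(q_\nu)\to\infty$ and $\frac1{\sigma_\nu}\max_j(e^{i(n_\nu+1)q_{\nu,j}}-1)\to0$ are exactly what is needed to make, respectively, the oscillatory error terms $O(\cha(q_\nu)^{-1})$ in \eqref{osc}--\eqref{drift} and the boundary-condition error $\bar Z_\nu^{\,n_\nu+1}-I$ negligible \emph{after} dividing by $\sigma_\nu$. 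A secondary technical point is the $r\to0$ limit hidden in ``$S=\lim S_\nu$'': one checks that $S_\nu$ converges because $z_{\nu,j}=(\lambda_*-r_\nu d_j)/2+i\sqrt{1-((\lambda_*-r_\nu d_j)/2)^2}\to(\lambda_*)/2+i\sqrt{1-(\lambda_*/2)^2}$, so $|\bar z_{\nu,j}-z_{\nu,j}|$ converges to a common positive limit and $S_\nu\to S=|{\ldots}|^{-1/2}I$, which is the source of the isotropy needed for the clean GOE statement in Corollary \ref{Sine1 limit}.
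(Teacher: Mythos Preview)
Your proposal is correct and follows essentially the same route as the paper: redo the moment estimates of Proposition~\ref{letm} for the rescaled process $\sigma_\nu^{-1}(X^\nu-I)$, invoking the hypothesis $\sigma_\nu n_\nu\,\cha(q_\nu)\to\infty$ so that the oscillatory error terms $O(\cha(q_\nu)^{-1})$ in \eqref{osc}--\eqref{drift} become $o(\sigma_\nu n_\nu)$, and then pass to the eigenvalue limit by the same three-term decomposition of $\sigma_\nu^{-1}(\bar Z^{n+1},Z^{n+1})X_{n_\nu}(I,-I)^T$ that you describe. Your hesitation about the precise $\Re$/$\Im$ bookkeeping in the final step is warranted but inessential---the paper itself does not spell it out either, simply writing the limit as $(I,I)Y_\lambda(1)(I,-I)^T$ and taking determinants.
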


\begin{remark}\label{GOE} For the $G=\Z_m$ case the proposition shows that instead of letting $n\to \infty$ first and then $\sigma\to 0$ to get the eigenvalues of a Gaussian matrix as the limit of the spectrum (as in Theorem \ref{thm2}) we can achieve this by changing $\sigma$ with $n$.

In the case when the graph is $\Z_m$,  the matrix $\Re(B(1)-A(1))$ is the one we got in the proof of  Theorem \ref{thm2},
it can be written as $(m+1)^{-1/2} (K+bI)$, where $b$ is a
standard normal random variable, and $K$ is a version of
the GOE: a real symmetric matrix with independent mean zero
real normal entries with covariance structure (\ref{almostGOE}).
Note that for the GOE, the diagonal terms have variance 2. The
distribution of the matrix $K$ is not invariant under
orthogonal conjugation. The $bI$ term amounts to
a random shift of the local eigenvalue process.
\end{remark}

\begin{proof}
Let $\tilde X^\nu_k(\lambda)=(\frac{1}{\sigma_\nu}(X^{\nu}_{\lfloor n_\nu
t\rfloor}(\lambda^*+\lambda\sigma_\nu/
n_\nu)-I)$.
From (\ref{evoR}) (and the following discussion) we get
\begin{align*}
\tilde X_k-\tilde X_{k-1}=\sigma^{-1} R_k(\sigma \lambda)(\sigma \tilde X_k+I)= \left( \frac{1}{\sqrt{n}}R_k'+\frac{\lambda}{n} R_k''  \right)(\sigma \tilde X_k+I)
\end{align*}
 The proof of the SDE convergence of $\tilde X$ follows very closely the proof of
Proposition \ref{letm}. The only difference is that because of $\sigma\to 0$ the functions $a(t,x), \tilde a(t,x), b(t,x)$ will not depend on $x$, that is why we get the matrix valued Brownian motion in the limit.

For the convergence of eigenvalues, we write 
\begin{align*}
\frac{1}{\sigma_\nu}(\bar Z^{n+1},Z^{n+1})(X_{n_\nu}^{(n)}-I)(I,-I)^T&=\frac{1}{\sigma_\nu}(\bar
Z^{n+1}-I,Z^{n+1}-I)X_{n_\nu}^{(n)}(I,-I)^T\\
&\hskip-30pt+\frac{1}{\sigma_\nu}(I,I).(I,-I)^T+
\frac{1}{\sigma_\nu}(I,I)(X_{n_\nu}^{(n)}-I)(I,-I)^T.
\end{align*}
Note that $I$ is the $m\times m$ identity matrix, except  in $X_{n_\nu}^{(n)}-I$ where it is $2m$-dimensional.
%
%
The first term on the right converges to $0$ since
$X_{n_\nu}^{(n)}(I,-I)^T$ is tight and
$(Z^{n+1}-I)/\sigma_\nu\to 0$ by assumption. The second
term vanishes, so considering the third term we conclude that
$$
\frac{1}{\sigma_\nu}(\bar Z^{n+1},Z^{n+1})(X_{n_\nu}^{(n)}-I)(I,-I)^T
\rightarrow
(I,I)Y_\lambda(1) (I,-I)^T.
$$
 uniformly  on
compacts. For a real $\lambda$ this is equal to $2S_{11}(\lambda I+\Re(A(1)-B(1)))S_{11}$ and its determinant  will be zero exactly at the eigenvalues of $\Re(B(1)-A(1))$. From this the second part of the proposition follows by the same arguments as in the proof of Corollary \ref{eigenvalues
converge}.
\end{proof}

\begin{corollary} \label{Sine1 limit} Fix $0<r<1$. Then for almost all $\lambda_*\in [-2(1-r),2(1-r)]$
 there exists a sequences of integers $m_\nu\to \infty, n_\nu\to \infty$, positive numbers $\tilde \sigma=\tilde \sigma_\nu\to 0$, noise matrices $V=V_\nu $ and coefficients $\gamma=\gamma_\nu$ so that
the eigenvalue process of
\begin{align}\label{scaled}
\gamma\left[(r \mathbb Z_{m})\times  \mathbb Z_{n}+\tilde \sigma V-\lambda_*I\right]
\end{align}
as $\nu\to\infty$ converges locally to the Sine$_1$ process. Here
the noise matrices $V $ are diagonal, the
 entries are independent, have mean 0, variance 1
    and uniformly bounded third moment.

Moreover, if $r=1$ then we may choose a sequence $\lambda_*=\lambda_*^{(\nu)}$ so that (\ref{scaled}) converges to the Sine$_1$ process.
\end{corollary}
Corollary \ref{Sine1 limit} immediately proves Theorem \ref{thmmain}.
\begin{proof}
First suppose that $0<r<1$. Note that the eigenvalues $d_j$  of $\Z_m$ are distinct and $\max |d_j|=2 \cos(\pi/(m+1))<2$.

We  consider a $\lambda_*$ so that the statement of Lemma \ref{l:aux} holds for all values of $m$. We start with a fixed $m$  and consider the appropriate sequence $n_\nu$.
Since $(n_{\nu}+1) q_j\to 0 \mod 2\pi$ we can choose  $\sigma_\nu=\sigma_{\nu,m}\to 0$ so that we also have $\frac1{\sigma_\nu} \max_j \left|e^{i(n_\nu+1) q_{j}}-1\right|\to 0$. Then we may apply Proposition
\ref{shorttime} with $G=\mathbb Z_{m}$  which means that the eigenvalues of \eqref{Mnu}
$M_\nu$ converge to those of $(m+1)^{-1/2}(K+bI)$ where $K, b$ are described in Remark \ref{GOE}.

As $m\to \infty$, by the methods of \cite{EY} the bulk
eigenvalue process of the matrices $\sqrt{m}(K+bI)$ (see Remark
\ref{GOE}) converges locally to the Sine$_1$ process.
The proof of this statement will be expanded in Lemma \ref{l:locallimit} in the Appendix.

This means that with $\tilde \sigma_\nu=\sigma_\nu/\sqrt{n}$ and an appropriate $\gamma_\nu$ the eigenvalue process (here denoted $s_{m,n}$) of (\ref{scaled}) converges to  the spectrum $s_{m}$ of $\sqrt{m}(K+bI)$, and as $m\to \infty$ this will converge to  the Sine$_1$ process. In short, in the topology of weak convergence, we have
$$\begin{array}{lcll}
s_{m,n} &\to& s_m , \qquad &n\to \infty,\ m \mbox{ fixed,}\\
s_{m} &\to& \text{Sine}_1, & m \to\infty.
\end{array}
$$
The standard diagonalization argument provides a sequence $n_m\to\infty $ so that $s_{m,n_m}\to$ Sine$_1$.

For the $r=1$ case note that for a fixed $m$ by Lemma \ref{l:aux} we can find a $\lambda_*^{(m)}\in (-4 \sin^2(\pi/(2+2m)),4 \sin^2(\pi/(2+2m))$ for which Proposition \ref{shorttime} can be applied. Thus for each $m$ (with an appropriate $\lambda_*^{(m)}$ centering) the spectrum of the rescaled process (\ref{scaled}) converges to that of $\sqrt{m}(K+bI)$ along an appropriate subsequence. The same diagonalization argument as in the $r<1$ case produces a sequence along which we have convergence to the Sine$_1$ process.
\end{proof}
\begin{remark}[Higher dimensional boxes]\label{r:hd}
When $G=\Z_{m_1}\times \ldots \times \Z_{m_d}$ then the eigenvectors of $G$
are of the form
\[
O_{j_1, \dots, j_d}(k_1,\dots,k_d)=\prod_{\ell=1}^d \sqrt{\frac{2}{m_\ell+1}} \sin\left(\frac{\pi j_\ell k_\ell}{m_\ell+1}\right)
\]
and the corresponding eigenvalues are $d_{j_1,\dots, j_d}=2 \sum_{\ell=1}^d  \cos\left(\frac{j_\ell \pi }{1+m_\ell}\right)$.
One can also calculate that
\begin{equation}\left(\prod_{\ell=1}^d (m_\ell+1)\right)\langle |O_{\underline{i}}|^2, |O_{\underline{j}}|^2 \rangle=
\prod_{\ell=1}^d (1+\frac12 1(i_\ell=j_\ell))\label{isotropic2}.\nonumber
\end{equation}
If the eigenvalues are distinct then  Lemma \ref{l:aux} and Proposition \ref{shorttime} will still apply. However, the limiting symmetric real Gaussian matrix $\Re\left(B(1)-A(1)\right)$ will have a more complicated covariance structure for $d>1$ then the one described in Remark \ref{GOE} and the current results are not strong enough to imply that the bulk scaling limit will be the Sine$_1$ process.

One can check that in that case $\Re\left(B(1)-A(1)\right)=M_1+M_2$ where $M_1$ is a constant times a GOE matrix and $M_2$ is an independent Gaussian matrix with a non-trivial covariance structure. Because of the component $M_1$ the local relaxation flow  argument of \cite{EY} will go through. The problem is caused by the fact that for the other component $M_2$ the strong local semicircle result is not available.
\end{remark}

\section{Appendix}
\label{section:Tools}

\subsection{SDE limit of Markov chains}

The following is the main tool for proving convergence in
the osciallatory setting.  It is Proposition 23 in
\cite{BVBV}, and is based on Theorem 7.4.1 of
\cite{EthierKurtz}.

\begin{proposition}\label{p_turboEK}
Fix $T>0$, and for each $n\ge 1$ consider a Markov chain
$$
(X^n_\ell\in \mathbb R^d,\, \ell =1\ldots  \lfloor nT
\rfloor).
$$
Let $Y^n_\ell(x)$ be distributed as the increment
$X^n_{\ell+1}-x$ given $X^n_\ell=x$. We define
\begin{align}\label{defab}
b^n(t,x)= n \ev[ Y_{\lfloor nt\rfloor}^n(x)],\qquad
a^n(t,x)=n\ev[ Y_{\lfloor nt\rfloor}^n(x) Y_{\lfloor
nt\rfloor}^n(x)^\textup{T}].
\end{align}
Suppose that as $n\to \infty $ we have
\begin{align}
|a^n(t,x)-a^n(t,y)|+|b^n(t,x)-b^n(t,y)|&\le
c|x-y|+o(1)\label{e lip}\\
 \sup_{x,\ell}  \ev[|Y^n_\ell(x)|^3] &\le cn^{-3/2} \label{e 3m},
\end{align}
and that there are functions $a,b$ from $\R\times[0,T]$ to
$\R^{d^2}, \R^d$ respectively with bounded first and second
derivatives so that
\begin{align}
\sup_{x,t} \Big|\int_0^t a^n(s,x)\,ds-\int_0^t a(s,x)\,ds
\Big|+\sup_{x,t} \Big|\int_0^t b^n(s,x)\,ds-\int_0^t
b(s,x)\,ds \Big| &\to 0. \label{e fia}
\end{align}
Assume also that the initial conditions converge weakly:
$$
X_0^n\Rightarrow X_0.
$$
Then $(X^n_{\lfloor n t\rfloor}, 0 \le t \le T)$ converges
in law to the unique solution of the SDE
$$
dX = b \,dt + g\, dB, \qquad X(0)=X_0$$
where $B$ is a $d$-dimensional standard Brownian motion and $g$ is any  $C^2$ function satisfying
$gg^{\rm T}=a.$
\end{proposition}

We need a result that strengthens the convergence of random
analytic functions.

\begin{proposition}\label{Montel} Let  $f_n$ be a sequence of random analytic functions on an open set $D\subset \CC$ so that
$\max_A |f_n|$ is tight for every closed ball $A\subset D$ and
$f_n\Rightarrow f$ in the sense of finite dimensional
distributions. Then $f$ has a unique analytic version and
$f_n \Rightarrow f$ in distribution with respect to
local-uniform convergence.
\end{proposition}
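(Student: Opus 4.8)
The plan is to realize everything inside the Polish space $H(D)$ of analytic functions on $D$ with the topology of local-uniform convergence, and then to combine tightness with an identification of finite-dimensional marginals. Fix an exhaustion $D=\bigcup_{k\ge 1}A_k$ by compact sets with $A_k\subset\operatorname{int}(A_{k+1})$; then $d(f,g)=\sum_k 2^{-k}\bigl(1\wedge\sup_{A_k}|f-g|\bigr)$ metrizes $H(D)$ and makes it a complete separable metric space, on which each map $g\mapsto\sup_{A_k}|g|$ is continuous.

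First I would establish tightness of the laws of the $f_n$ in $H(D)$. Given $\eps>0$, the hypothesis that $\max_{A_k}|f_n|$ (a genuine maximum by continuity) is tight provides, for each $k$, a constant $M_k$ with $\pr\bigl(\sup_{A_k}|f_n|>M_k\bigr)<\eps 2^{-k}$ for all $n$. The set $\mathcal K=\{g\in H(D):\sup_{A_k}|g|\le M_k\text{ for all }k\}$ is closed, and it is locally uniformly bounded, hence relatively compact in $H(D)$ by Montel's theorem; thus $\mathcal K$ is compact, while $\pr(f_n\notin\mathcal K)\le\sum_k\pr(\sup_{A_k}|f_n|>M_k)<\eps$. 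So $\{f_n\}$ is tight, and by Prokhorov's theorem every subsequence has a further subsequence converging in distribution in $H(D)$ to some Borel probability measure $\mu$.

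It remains to identify $\mu$. For any $z_1,\dots,z_\ell\in D$ the evaluation $\pi_{z_1,\dots,z_\ell}\colon g\mapsto(g(z_1),\dots,g(z_\ell))$ is continuous on $H(D)$, so along a subsequence with $f_{n_j}\Rightarrow\mu$ the random vectors $\pi_{z_1,\dots,z_\ell}(f_{n_j})$ converge in law both to $\mu\circ\pi_{z_1,\dots,z_\ell}^{-1}$ and, by hypothesis, to the prescribed finite-dimensional distribution of $f$; hence these agree. Since $H(D)$ is separable and the evaluations at a countable dense subset of $D$ separate points of $H(D)$ (identity theorem), the Borel $\sigma$-algebra of $H(D)$ is generated by the evaluation maps, so a Borel probability measure on $H(D)$ is determined by its finite-dimensional marginals. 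Thus $\mu$ is the unique such measure: this exhibits the claimed (unique) analytic version of $f$, and, every subsequence of $(f_n)$ having a further subsequence converging to the same $\mu$, the full sequence satisfies $f_n\Rightarrow\mu$ in $H(D)$, i.e. with respect to local-uniform convergence.

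The only step I expect to require genuine care is the measure-theoretic one, namely that finite-dimensional marginals determine a Borel probability measure on the infinite-dimensional space $H(D)$; once the $\sigma$-algebra generated by point evaluations is identified with the full Borel $\sigma$-algebra, the rest is Montel's theorem together with Prokhorov's theorem. One should also keep in mind that tightness of the real random variables $\max_{A}|f_n|$ is precisely the input needed to run the Montel compactness argument uniformly in $n$.
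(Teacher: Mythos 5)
Your proof is correct, and it takes a recognizably different route from the paper's, so a brief comparison is worthwhile. You work directly in the Polish space $H(D)$ of analytic functions with the local-uniform topology: Montel gives relative compactness of locally bounded sets, the tightness hypothesis then yields tightness of the laws of $f_n$ in $H(D)$, Prokhorov produces subsequential weak limits, and these are identified by showing that the evaluation maps at a countable dense set generate the Borel $\sigma$-algebra (so the limit law is pinned down by the finite-dimensional distributions, and the subsequence principle upgrades to convergence of the full sequence). The paper instead avoids setting up the measure theory on $H(D)$: it first passes to a subsequence along which the finite-dimensional distributions over a countable dense $D'$ together with the $\max_A|f_n|$ converge jointly, invokes Skorokhod's representation theorem to realize this as almost sure convergence on a single probability space, and then applies Montel pathwise to the (now deterministic, realization-by-realization) sequence $f_n(\omega)$ to obtain an analytic limit agreeing with $f$ on $D'$; uniqueness of the limit in law again comes from the finite-dimensional distributions. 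Both proofs rest on the same two pillars, Montel's theorem for compactness and the subsequence-plus-uniqueness trick, but you pay the cost up front in the measure theory on $H(D)$ (the step you correctly flag as requiring care: that point evaluations generate the Borel $\sigma$-algebra), whereas the paper pays it in the Skorokhod realization step and keeps Montel purely at the level of deterministic sequences of functions. Your version is closer to the textbook Prokhorov template and arguably cleaner once the $\sigma$-algebra identification is in place; the paper's version is more elementary in the sense that it never has to discuss measures on a function space at all.
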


\begin{proof}
Pick a countable dense set of points $D'\subset D$ and  let $\mathcal A$ be the countable  set  of closed balls $A\subset D$ with center in $D'$ and rational radius.
We can
first find a subsequence so that the joint distribution of
$\max_A |f_n|$, $A\in \mathcal A$ and  $f_n(z), z\in D'$ converges. By the
Skorokhod embedding theorem \cite{Kallenberg} we can realize the sequence
$(f_n(z),z\in D')$ on a single probability space $\Omega$
so that almost surely $f_n(z)\to f(z)$ for all $z \in D'$
and also $\max_A |f_n|$ converges (and so it is bounded)
for all $A\in \mathcal A$. By continuity, we can define the
$f_n$ as analytic functions on $D$ for the probability
space $\Omega$. Then a.s.~the following holds: for every compact set $B$ there exists a random constant $c_B$ with $\max_{z\in B, n} |f_n(z)|\le C_B$.
To see this, note that $B$ is covered by $ \cup_{A\in \mathcal A}\textup{interior}(A)$ and since $B$ is compact we can choose a finite sub-cover.
Then the sequence $f_n$
has at least one analytic
limit in the sense of uniform-on-compacts convergence by
Montel's theorem. This limit must agree with $f$ on all
points $z\in D'$, so it is unique.

By the above argument any sequence has a further
subsequence that converges locally uniformly to some
analytic $f$ in distribution. But the distribution of $f$
is determined by its finite dimensional distributions, so
the limit is unique.
\end{proof}

\begin{corollary}\label{analytic}
Let $f_n$ be random analytic functions on an open set $D\subset \CC$ so that $\ev
|f_n(z)| \le g(z)$ for a locally bounded function $g$. Assume that
$f_n(z)\Rightarrow f(z)$ in the sense of finite dimensional
distributions. Then $f$ has a unique analytic version and
$f_n \Rightarrow f$ in distribution with respect to
local-uniform convergence.
\end{corollary}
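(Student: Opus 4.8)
The plan is to reduce everything to Proposition \ref{Montel}: it suffices to prove that, for every compact $A\subset D$, the random variable $\max_A|f_n|$ is tight; Proposition \ref{Montel} then supplies both a unique analytic version of $f$ and the local-uniform convergence $f_n\Rightarrow f$. The analytic input I would use is that $\log|f_n|$ is subharmonic, hence so is $|f_n|^p=e^{p\log|f_n|}$ for every $p>0$, since $t\mapsto e^{pt}$ is convex and increasing. Fix $A$, cover it by finitely many discs $D(w_1,\rho),\dots,D(w_N,\rho)$ with $\overline{D(w_i,2\rho)}\subset D$, and set $A'=\bigcup_i\overline{D(w_i,2\rho)}$, a compact subset of $D$. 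By the sub-mean-value inequality over discs of radius $\rho$, for $z\in\overline{D(w_i,\rho)}$ one has $|f_n(z)|^p\le\frac1{\pi\rho^2}\int_{D(z,\rho)}|f_n|^p\,dA\le\frac1{\pi\rho^2}\int_{A'}|f_n|^p\,dA$, so
\[
\bigl(\max_A|f_n|\bigr)^p\ \le\ \frac1{\pi\rho^2}\int_{A'}|f_n(z)|^p\,dA(z)\qquad\text{for every }p>0 .
\]

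The main estimate is then obtained by taking expectations in this inequality for a suitable exponent. Since $g$ is locally bounded, $G':=\sup_{A'}g<\infty$; and if $h$ grows at least polynomially, i.e.\ $h(t)\ge c_0 t^{p}$ for $t\ge t_0$ for some $p,c_0>0$ — which holds in all cases used in this paper, for instance $h(t)=t$ or the second-moment bound $\ev\|X^{(n)}_k(\lambda)\|^2\le\exp(c(1+\lambda^2)t)$ from the proof of Proposition \ref{letm} — then $\ev|f_n(z)|^{p}\le t_0^{p}+c_0^{-1}\ev\,h(|f_n(z)|)\le t_0^{p}+c_0^{-1}g(z)$. By Tonelli, $\ev\bigl(\max_A|f_n|\bigr)^{p}\le\frac1{\pi\rho^2}\int_{A'}\ev|f_n(z)|^{p}\,dA(z)<\infty$, uniformly in $n$, and a uniform bound on $\ev(\max_A|f_n|)^p$ gives tightness of $\max_A|f_n|$ by Markov's inequality. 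Proposition \ref{Montel} finishes the proof.

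The remaining point — which I expect to be the genuine obstacle — is a slowly growing $h$, one dominated by every power $t^p$ (e.g.\ $h(t)=\log t$), for which the step bounding $\ev|f_n(z)|^p$ by $g(z)$ is unavailable. Here Markov's inequality still gives uniform pointwise tightness on $A'$, namely $\pr(|f_n(z)|>M)\le G'/h(M)\to0$ uniformly over $z\in A'$ and over $n$, and the task is to upgrade this to tightness of $\max_A|f_n|$. I would do so through an anti-concentration property of subharmonic functions: a subharmonic $|f_n|$ that is small in $L^1(A')$ cannot have a tall spike, because its value at a point is dominated by its average over a small disc; more quantitatively, a Poisson–Jensen estimate bounds $\log^+\sup_{D(w,\rho)}|f_n|$ by a bounded multiple of the circular average of $\log^+|f_n|$ over $\partial D(w,2\rho)$, whose expectation is finite once $h$ grows at least like $\log$, giving an $L^1$ bound on $\log^+\max_A|f_n|$ and hence tightness. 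Iterating this device (replacing $\log^+$ by suitable slowly growing compositions) would cover an arbitrary increasing unbounded $h$; for the applications in this paper the polynomial case above already suffices, and Proposition \ref{Montel} again concludes.
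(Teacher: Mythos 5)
Your proof takes essentially the same route as the paper's one-line argument: the paper invokes ``Cauchy's theorem'' to bound $\max_A|f_n|$ by an integral average of $|f_n|$ over a neighboring contour and then takes expectations; you instead use the sub-mean-value inequality for the subharmonic function $|f_n|^p$ over discs. Both reduce tightness of $\max_A|f_n|$ to the pointwise moment bound and then invoke Proposition~\ref{Montel}, and your treatment of the polynomial-growth case is complete and correct. The concern you raise about slowly growing $h$ is legitimate and applies just as much to the paper's terse argument: as literally stated, the Corollary permits any increasing unbounded $h$, say $h(t)=\log\log(e+t)$, and such an $h$ need not dominate $\epsilon\log^{+}t$ for any $\epsilon>0$, so neither the Cauchy-integral bound nor your $L^p$ or $\log^{+}$ averaging yields tightness without further input; your Poisson--Jensen/iteration sketch is the right direction but is not carried out. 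In practice this is harmless, since in this paper (the proof of Proposition~\ref{letm}) the Corollary is only needed with $h(t)=t$ or $h(t)=t^2$, which your argument handles cleanly.
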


\begin{proof}
Suppose that a closed disk $A$ is contained in a slightly bigger disk $B$ which is still contained in $D$. Then by Cauchy's integral theorem $\max_A |f_n|\le C \int_{\partial B} |f_n(z)| $
and by the condition on $\ev |f_n(z)|$ we get that $\ev \max_A|f_n|$ is uniformly bounded.
This implies that  $\max_A |f_n|$ is tight for every
closed ball $A\subset D$, and the claim follows from the
proposition.
\end{proof}

\subsection{The point process limit of the modified GOE}

Consider the $n\times n$ symmetric random matrix $H_n=n^{-1/2} (K+b I)$ where $b$ is a
standard normal random variable, and $K$ is the version of
the GOE considered in (\ref{almostGOE}).
We will show that the eigenvalue process of $H_n$ in the bulk converges  locally to the Sine$_1$ process, the bulk scaling limit of the GOE eigenvalue process. Bulk scaling means that we consider $\rho(\lambda) \sqrt{n} (H_n-\lambda I) $ where $|\lambda|<2$ and $\rho(\lambda)=\frac{1}{2\pi}\sqrt{4-\lambda^2}\, 1_{|\lambda|\le 2}$ is the semicircle density.

\begin{lemma}\label{l:locallimit}
For any $|\lambda|<2$ and  compactly supported continuous test function $ \Theta :\R^k\to \R$ we have
\begin{equation}
\int_{\R^n} d\alpha_1\dots d\alpha_n  \Theta (\alpha_1,\dots, \alpha_k) \rho(\lambda)^{-k}
(p_{H,n}^{(k)}-p_{GOE, n}^{(k)})\left(\lambda+\frac{\alpha_1}{n \rho(\lambda)}, \dots \lambda+\frac{\alpha_k}{\rho(\lambda) n}  \right)\to 0.\label{limit}
\end{equation}
Here $p_{H,n}^{(k)}$ is the $k$-point intensity function of the eigenvalues of $H_n$. The function  $p_{GOE, n}^{(k)}$ is the same for the $n\times n$ GOE matrix with variances $2/n$ and $1/n$ on the diagonal and elsewhere, respectively.
\end{lemma}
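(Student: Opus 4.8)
The plan is to deduce Lemma \ref{l:locallimit} from the known universality of the bulk local statistics of Wigner matrices, specifically the local relaxation flow method of \cite{EY}. The key point is that the matrix $H_n = n^{-1/2}(K + bI)$ differs from a genuine GOE matrix in only two mild ways: the diagonal entries of $K$ have variance $5/4$ instead of $2$, and there is an added rank-one shift $n^{-1/2} b I$. The rank-one shift $n^{-1/2} b I$ only translates the whole spectrum by the scalar $n^{-1/2}b$, which, after bulk rescaling at a fixed point $\lambda$ with $|\lambda|<2$, converges to $0$ in the relevant topology; since $n^{-1/2}b \to 0$ in probability at rate much faster than the eigenvalue spacing $\sim (n\rho(\lambda))^{-1}$, conditioning on $b$ and absorbing it into $\lambda$ shows it has no effect on the local limit. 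So I would first reduce \eqref{limit} to the same statement with $b=0$, i.e. comparing the eigenvalue correlation functions of $n^{-1/2}K$ to those of the GOE.

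Next, I would set up $n^{-1/2}K$ as a Wigner matrix with independent (up to symmetry) mean-zero entries whose variances are $1/n$ off the diagonal and $5/4n$ on the diagonal; this satisfies the standard hypotheses (subexponential or bounded-moment decay — here the entries are Gaussian, so all moment conditions are automatic). I would invoke the strong local semicircle law for such generalized Wigner matrices (as in \cite{EYY}, \cite{EY}), which gives rigidity of eigenvalues and the necessary a priori control on the Stieltjes transform down to scales slightly above $1/n$. Then I would run the local relaxation flow / Dyson Brownian motion comparison argument: both $n^{-1/2}K$ and the GOE can be viewed as initial conditions, and after running the symmetric ($\beta=1$) Dyson Brownian motion for a short time $t = n^{-1+\epsilon}$, the local correlation functions averaged over an energy window of size $n^{-1+\delta}$ converge to those of the reference Gaussian ($\beta=1$) ensemble — hence to the Sine$_1$ process — regardless of the precise diagonal variance, since the local relaxation estimates depend only on the semicircle rigidity and not on the exact entry distribution. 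Finally a Green's function comparison (four-moment-type) argument removes the small Gaussian component that was added, matching $n^{-1/2}K$ at time $t$ back to $n^{-1/2}K$ itself; here the fact that the first three moments of the entries of $K$ and of the corresponding time-$t$ evolved matrix can be matched (with the fourth moment error controlled) is exactly what makes the diagonal-variance discrepancy harmless.

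I would then assemble these pieces: the GDBM step shows the time-$t$ regularized matrices have Sine$_1$ local statistics in the averaged-energy sense of \eqref{limit}, the comparison step transfers this back to $n^{-1/2}K$, the analogous chain applied to the true GOE gives the same limit (this is the classical result of \cite{mehta}, recovered here uniformly in the same framework), and subtracting the two $k$-point functions against the test function $\Theta$ yields \eqref{limit}. The averaging over $\alpha_1,\dots,\alpha_k$ against a compactly supported $\Theta$ in \eqref{limit} is precisely the form in which the local relaxation flow results are naturally stated, so no additional de-averaging (fixed-energy) input is needed.

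The main obstacle I expect is not any single deep new estimate but the bookkeeping required to check that the generalized Wigner matrix $n^{-1/2}K$, with its anomalous $5/4$ diagonal variance, falls within the scope of the local semicircle law and the local relaxation flow machinery exactly as stated in \cite{EY}, \cite{EYY} — in particular verifying the flatness/boundedness conditions on the variance profile and confirming that the Green's function comparison tolerates the diagonal discrepancy (this is where Problem 3 in the introduction, about eliminating the $r\to 0$ condition, signals that more general input would be desirable). Since all entries here are exactly Gaussian, the moment-matching hypotheses are trivially met, so I expect this to go through, but it is the step that requires the most care in citing the literature precisely rather than re-deriving it.
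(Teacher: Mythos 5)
Your high-level plan (decompose the matrix into a Gaussian-divisible form, invoke the local relaxation flow of \cite{EY} together with the strong local semicircle law of \cite{EYY}, conclude averaged local universality, then compare with the GOE) is the same as the paper's, which splits $\frac1{\sqrt n}K=K_1+K_2$ with $K_1 = \tfrac12\,$GOE and applies Theorem 2.3 of \cite{EYY} directly. Your insertion of a separate four-moment/Green's function comparison step is unnecessary here since $K$ is already Gaussian and thus already Gaussian-divisible, but that is only a redundancy, not an error.

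The genuine gap is in your treatment of the random shift $n^{-1/2}bI$. You assert that $n^{-1/2}b$ goes to zero ``at rate much faster than the eigenvalue spacing $\sim (n\rho(\lambda))^{-1}$'' and hence that the shift can be ignored. This is backwards: the shift is of size $n^{-1/2}$, which is \emph{much larger} than the spacing $\sim n^{-1}$. The spectrum of $H_n$ near $\lambda$ is therefore the spectrum of $n^{-1/2}K$ near the \emph{random, $n$-dependent} energy $\lambda-n^{-1/2}b$, which moves by many interparticle distances. ``Conditioning on $b$ and absorbing it into $\lambda$'' then requires a bulk universality statement that holds uniformly around a moving reference energy, not just at a fixed $\lambda$; this is exactly what the paper supplies via the convolution identity expressing $p_{H,n}^{(k)}$ as a Gaussian average of $p_{K,n}^{(k)}$, a Fubini rewriting of that Gaussian average as a mixture of window averages of width $q/\sqrt n$ (so that the averaged estimate from Theorem 2.3 of \cite{EYY} applies at each $q$), and finally the moving-window convergence result proved in \cite{BVBV}. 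Without either the averaged-in-energy estimate or the moving-window input, the reduction to $b=0$ as you describe it does not go through.
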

\begin{proof}
Since the proof works exactly the same way for any  $|\lambda|<2$, we will assume $\lambda=0$.

Our proof relies on the local relaxation flow arguments of \cite{EY} and the strong local semicircle law proved in \cite{EYY}. The argument in a nutshell is the following: if we have a real symmetric random matrix whose eigenvalues are well-approximated by the semicircle law locally then by adding a small constant times an independent GOE matrix (i.e. by running Dyson's Brownian motion for a small time with our matrix as the initial condition) the scaled eigenvalue process of the resulting matrix will be close to the Sine$_1$ process.

We will use  Theorem 2.3 of \cite{EYY} (see also the comments after the theorem) which provides a powerful quantitative version of the previous argument. Since we can write $\frac1{\sqrt{n}}K=K_1+K_2$  where $K_1$ is $1/2$ times a GOE and $K_2$ is an independent symmetric random matrix satisfying the conditions of the theorem then
 for any $q>0$ we have
\begin{align}\label{aux1}
&\big| \int_{-\frac{q}{\sqrt{n}}}^{\frac{q}{\sqrt{n}}}\frac{d\mu \sqrt{n}}{2q} \int_{\R^k} d\alpha_1\dots d\alpha_n \Theta (\alpha_1,\dots, \alpha_k) \rho(0)^{-k}\\
&\hskip10mm \times\nonumber
(p_{K,n}^{(k)}-p_{GOE, n}^{(k)})\left(\mu+\frac{\alpha_1}{n \rho(0)}, \dots, \mu+\frac{\alpha_k}{n \rho(0)}  \right)\big|\le C n^{-1/4+\eps} (q^{-1}+q^{-1/2}).
\end{align}
(We apply the theorem with $E=0, b=q/\sqrt{n}, t=1/2, \eps'\ll 1$ and $\delta=1-\eps'$.)
Since the spectrum of $H_n$ can be obtained by a random shift of the spectrum of $n^{-1/2} K$ we have
\begin{align}\label{aux2}
p_{H,n}^{(k)}\left(\frac{\alpha_1}{n \rho(0)}, \dots, \frac{\alpha_k}{n \rho(0)}  \right)=
&\int_{-\infty}^\infty \frac{db}{\sqrt{2\pi n^{-1}}} e^{-\frac{b^2 n}{2}}
p_{K,n}^{(k)}\left(b+\frac{\alpha_1}{n \rho(0)}, \dots, b+\frac{\alpha_k}{n \rho(0)}  \right).
\end{align}
By Fubini for any nonnegative function $F$ we have
\[
\int_{-\infty}^\infty \frac{db}{\sqrt{2\pi n^{-1}}} e^{-\frac{b^2 n}{2}} F(b)=\int_0^\infty e^{-\frac{q^2}{2}} \sqrt{\frac{2}{\pi }} q^2 \int_{-\frac{q}{\sqrt{n}}}^{\frac{q}{\sqrt{n}}}\frac{db \sqrt{n}}{2q} F(b)dq.
\]
Using this with (\ref{aux1}) and (\ref{aux2}) we get the upper bound
\[
\left|\textup{left hand side of (\ref{limit})} \right|\le\int_0^\infty e^{-\frac{q^2}{2}} \sqrt{\frac{2}{\pi }} q^2  C n^{-1/4+\eps} (q^{-1}+q^{-1/2})\le  C' n^{-1/4+\eps}
\]
which shows that the limit of the eigenvalue process of $H_n$ is the same as the limit of  $GOE$ eigenvalues multiplied by $n$ and shifted by an independent Gaussian $b_n$ of variance $n$. We first condition on the sequence of these independent Gaussians, so the GOE eigenvalues are now centered at $-b_n$, which, on the semicircle scale, converges to $0$.
In the classical literature, convergence is usually proved around a fixed window at $c$, $|c|<2$ (on the semicircle scale). The moving window case is rigorously proved in \cite{BVBV}. The lemma follows after removing the conditioning.
\end{proof}

\smallskip

\noindent{\bf Acknowledgements.} We thank Eugene
Kritchevski, Alex Bloemendal, and  L\'aszl\'o Erd\H os for useful discussions, and Hermann Schulz-Baldes for references.
We also thank the anonymous referees for valuable comments and suggestions.
The second author
thanks the hospitality of the Banff International Research
Station, where part of this work was conceived. This research is supported
by
the NSF Grant DMS-09-05820 (Valk\'o) and the NSERC discovery accelerator grant program and the Canada
Research Chair program (Vir\'ag).

\def\cprime{$'$}


\end{document}